\theoremstyle{plain}
\newtheorem{theorem}{Theorem}
\newtheorem*{theorem*}{Theorem}
\newtheorem{proposition}{Proposition}
\newtheorem{lemma}{Lemma}
\newtheorem{corollary}[]{Corollary}
\theoremstyle{definition}
\newtheorem{claim}{Claim}
\newtheoremstyle{mythm}%
{3pt}% Space above
{3pt}% Space below
{}% Body font 
{}% Indent amount
{\bfseries}% ⟨Theorem head font⟩
{.}% ⟨Punctuation after theorem head ⟩
{.5em}% Space after theorem head 
{}%
\theoremstyle{mythm}
\newtheorem{remark}{Remark}
\newtheorem*{remark*}{Remark}
\newcommand\xqed[1]{%
  \leavevmode\unskip\penalty9999 \hbox{}\nobreak\hfill
  \quad\hbox{#1}}
\newcommand\demo{\xqed{$\blacktriangleleft$}}
\newcommand{\e}{\varepsilon}
\newcommand{\B}{\mathbb{B}}
\newcommand{\F}{\mathbb{F}}
\newcommand{\N}{\mathbb{N}}
\newcommand{\R}{\mathbb{R}}
\newcommand{\bS}{\mathbb{S}}
\newcommand{\Z}{\mathbb{Z}}
\newcommand{\calL}{\mathcal{L}}
\newcommand{\calO}{\mathcal{O}}
\newcommand{\calQ}{\mathcal{Q}}
\newcommand{\cups}{\cup \cdots \cup}
\newcommand{\wedges}{\wedge \cdots \wedge}
\newcommand{\timess}{\times\cdots\times}
\DeclareMathOperator{\Supp}{supp}
\DeclareMathOperator{\lspan}{span}
\DeclareMathOperator{\argmin}{arg\, min}
\DeclareMathOperator{\Gr}{\bm{Gr}}
\renewcommand{\(}{\left(}
\renewcommand{\)}{\right)}
\newcommand*{\rom}[1]{\expandafter\@slowromancap\romannumeral #1@}
\date{10 January 2022}
\title{Multijoints And Factorisation}% A Discrete Analogue of a Theorem of  Bourgain--Guth % 
\author{Michael Chi Yung Tang}
\address{School of Mathematics and Maxwell Institute for Mathematical Sciences,
University of Edinburgh, Peter Guthrie Tait Road, Kings Buildings, Edinburgh EH9 3FD}
\email{michael.tang@ed.ac.uk}
\begin{document}
\begin{abstract}
We solve the dual multijoint problem and prove the existence of so-called ``factorisations'' for arbitrary fields and multijoints of $k_j$-planes. 
 More generally, we deduce a discrete analogue of a theorem due in essence to Bourgain and Guth. Our result is a universal statement which describes a property of the discrete wedge product without any explicit reference to multijoints and is stated as follows: Suppose that $k_1 + \ldots + k_d = n$. There is a constant $C=C(n)$ so that for any field $\F$ and for any finitely supported function $S : \mathbb{F}^n \rightarrow \mathbb{R}_{\geq 0}$, there are factorising functions $s_{k_j} : \mathbb{F}^n\times \Gr(k_j, \F^n)\rightarrow \mathbb{R}_{\geq 0}$ such that 
 \[(V_1 \wedges V_d)S(p)^d \leq C\prod_{j=1}^d s_{k_j}(p, V_j),\]
 for every $p\in \F^n$ and every tuple of planes $V_j\in \Gr(k_j, \F^n)$, and 
 \[\sum_{p\in \pi_j} s(p, e(\pi_j)) =\norm{S}_d\]
 for every $k_j$-plane $\pi_j\subset \F^n$, where $e(\pi_j)\in \Gr(k_j,\F^n)$ denotes the translate of $\pi_j$ that contains the origin and $\wedge$ denotes the discrete wedge product. 
\end{abstract}

\maketitle
\section{Introduction}
The multijoint problem is the discrete analogue of the multilinear Kakeya problem. This discrete problem is cast in affine space over an arbitrary field $\F$, finite or otherwise. Consider the projective space of lines through the origin in $\F^d$,\footnote{The degree of multilinearity $d$ and ambient dimension $n$ coincide and so we denote them both by $d$ until Subsection \ref{section:results1}.} which we denote by $\bS^{d-1}$, and its elements, which we call \textbf{directions}. If $l\subset \F^d$ is a line, let $e(l) \in \bS^{d-1}$, the direction of $l$, be the translate of $l$ which contains the origin. Let $ \omega_j\in \bS^{d-1}$ for $1\leq j \leq d$. We define the \textbf{discrete wedge product} $\wedge_{j=1}^d \omega_j = \omega_1 \wedges \omega_d = 1$ if the directions $\omega_1, \ldots, \omega_d$ span $\F^d$, and $0$ otherwise. Then, we can define the \textbf{multijoint kernel} $\delta$ by
\[\delta(p,l_1, \ldots, l_d) :=  \(\prod_{j=1}^d \chi_{l_j}(p)\) e(l_1)\wedges e(l_d),\]
for lines $l_j \subset \F^d$ and points $p\in \F^d$. For each $1\leq j \leq d$ let $\calL_j$ be a set of lines in $\F^d$ and we define set of their \textbf{multijoints} by
\[J := \left\{ p\in \F^d : \exists (l_j)_j\in\calL_1\timess\calL_d \text{ s.t. } \delta(p,l_1, \ldots, l_d)=1\right\}.\]
Then the \textbf{multijoint problem} consists of establishing the inequality
\begin{equation}\label{eq:06}\sum_{p\in \F^d} \(\sum_{l_1\in \calL_1}\cdots \sum_{l_d\in\calL_d} \delta(p,l_1, \ldots, l_d)f_1(l_1) \cdots f_d(l_d)\)^{\frac{1}{d-1}} \lesssim \prod_{j=1}^d \(\sum_{l_j\in\calL_j}f_j(l_j)\)^{\frac{1}{d-1}}\end{equation}
for arbitrary $f_j : \calL_j \rightarrow \R_{\geq0}$, with implicit constants independent of $\calL_j$ and $f_j$. This was first proved by Zhang, \cite{Zha16}.
\subsection{Multijoint Inequalities as Boundedness of Operators}\label{section:operators}
We interpret the multijoint inequality as boundedness of a multilinear operator and give a proof of this boundedness by establishing two assertions.

We define the  \textbf{multijoint operator} by
\[T[f_1, \ldots, f_d](p)= \sum_{l_1\in \calL_1}\cdots \sum_{l_d\in\calL_d} \delta(p,l_1, \ldots, l_d)f_1(l_1)\cdots f_d(l_d),\]
for $p \in \F^d$ and arbitrary $f_j : \calL_j\rightarrow \R_{\geq0}$. Then we can express the multijoint inequality \eqref{eq:06} as the boundedness of the multilinear operator $T$, 
\begin{equation}\label{eq:multiOp}\norm{T[f_1, \ldots, f_d]^{\frac 1 d}}_{L^{\frac{d}{d-1}}(J)} \leq C \prod_{j=1}^d \norm{f_j}_{L^1(\calL_j)}^{\frac 1 d},\end{equation}
for arbitrary $f_j : \calL_j \rightarrow \R_{\geq0}$, where we use $\norm{f}_{L^p(A)}$ to denote $\(\sum_{a\in A}\abs{f(a)}^p\)^\frac 1 p$ for any discrete set $A$.

Now, consider an arbitrary non-negative test function $S : J \rightarrow \R_{\geq0}$. Suppose that we can find a ``factorising'' function $s : J\times \bS^{d-1} \rightarrow \R_{\geq0}$ so that 
\[T[f_1, \ldots, f_d](p)S(p)^d  \leq C^d \sum_{l_1\in\calL_1}\cdots\sum_{l_d\in\calL_d} \prod_{j=1}^d s(p, e(l_j))f_j(l_j)\]
uniformly over all $f_j :\calL_j \rightarrow \R_{\geq0}$ - this is the first assertion mentioned above. Defining positive linear operators $T_j; L^1(\calL_j) \rightarrow L^1(J)$ by 
\[T_j[f_j](p) =  \sum_{l_j\in\calL_j}s(p,e(l_j))f_j(l_j),\]
 for any $p\in J$, let us further suppose that $s$ can be chosen so that the operators $T_j$ each satisfy 
\[\norm{T_j[f_j]}_{L^1(J)}\leq \norm{S}_d\norm{f_j}_{L^1(\calL_j)}\]
 for all $f_j : \calL_j\rightarrow \R_{\geq0}$ - this is the second assertion. Then 
\[\sum_{p\in J}S(p)T[f_1, \ldots, f_d](p)^{\frac 1 d}\leq C \sum_{p\in J} \prod_{j=1}^d T_j [f_j](p)^{\frac1 d}\leq C\prod_{j=1}^d \norm{T_j[f_j]}_1^\frac 1 d \leq C\norm{S}_d\prod_{j=1}^d \norm{f_j}_1^\frac1d,\]
where we have used H\"older's inequality followed by boundedness of each $T_j$. Therefore, to prove \eqref{eq:multiOp}, and hence \eqref{eq:06}, it suffices, for arbitrary $S$, to find bounded linear operators $T_j$, as above, so that 
\[S(p)T[f_1, \ldots, f_d](p)^{1/d} \leq C T_1[f_1](p)^{1/d}\cdots T_d[f_d](p)^{1/d}\]
for all $f_j : \calL_j \rightarrow \R_{\geq0}$ and $p\in J$ for some constant $C=C(d)$.

This type of analysis is closely related to so-called geometric multilinear duality, \cite{CHV20}. Such methods were used by Carbery and Valdimarsson in their proof of Guth's endpoint mulitlinear Kakeya theorem, \cite{CarVal12}. Importantly, to analyse the multijoint problem in terms of functional operators, it is crucial we bound $ST^{1/d} $ by the geometric mean of the operators $T_j$, which contains precisely $d$ factors and weights that depend on $d$, only. That we bound $T$ from above by a geometric mean is what motivates the description of the results described in this article as \textbf{factorisation} theorems.

This analysis was motivated by the following theorem, essentially due to Bourgain and Guth, which formed the cornerstone of their proof of the general multilinear Kakeya theorem. The precise formulation of this result was not stated by Bourgain and Guth, but was given in \cite{CarVal12} by Carbery and Valdimarsson. 

To state these factorisation results we momentarily use $\bS^{d-1}$ to denote the classical unit sphere in $\R^d$. For any non-zero vectors $\omega_1, \ldots, \omega_d\in \bS^{d-1}$ we define the \textbf{Euclidean wedge product} $\omega_1 \wedges \omega_d$ to be the unsigned volume of the parallelepiped with edges $\omega_1, \ldots, \omega_d$.

\begin{theorem*}[Multilinear Kakeya Factorisation Theorem, \cite{Gut08, BouGut10, CarVal12}]\label{theorem:BG}
Let $\calQ$ be the lattice of unit cubes in $\R^d$ and let $S: \calQ \rightarrow [1, \infty)$ be finitely supported. Then there exists a function $s : \calQ \times \bS^{d-1} \rightarrow \R_{\geq 0}$ so that
\[(\omega_1 \wedges\omega_d) S(Q)^d\lesssim \prod_{j=1}^ds(Q, \omega_j)\]  
for all $Q\in\calQ$ and $\omega_j\in\bS^{d-1}$, where $\wedge$ denotes the Euclidean wedge product, and so that
\[\sum_{Q : T\cap Q \neq\emptyset } s(Q,e(T)) \leq \norm{S}_d\] 
for any tube $T\subset \R^d$ with unit cross sectional area and direction $e(T)$.
\end{theorem*}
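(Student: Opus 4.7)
The plan is to derive the factorisation as a dual formulation of Guth's endpoint multilinear Kakeya theorem, reversing the geometric multilinear duality argument laid out in Section~\ref{section:operators}. After normalising so that $\|S\|_d = 1$ and restricting to the (finite) support of $S$, one seeks a function $s$ lying in the convex cone
\[\calS := \left\{s : \calQ \times \bS^{d-1} \to \R_{\geq 0} \,:\, \sum_{Q : T \cap Q \neq \emptyset} s(Q, e(T)) \leq 1 \text{ for every unit tube } T\right\}\]
such that $(\omega_1 \wedges \omega_d) S(Q)^d \leq C \prod_{j=1}^d s(Q, \omega_j)$ holds pointwise, with $C = C(d)$.

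First I would recast this as a minimax problem: infimise
\[\sup_{Q, \omega_1, \ldots, \omega_d} \frac{(\omega_1 \wedges \omega_d) S(Q)^d}{\prod_j s(Q, \omega_j)}\]
over $s \in \calS$. Since $\calS$ is convex and, after a harmless discretisation of $\bS^{d-1}$, compact on the finite support of $S$, a Sion-type minimax theorem swaps the infimum with a supremum over probability measures $\mu$ on configurations $(Q, \omega_1, \ldots, \omega_d)$. The dual problem then asks for a universal bound on $\int (\omega_1 \wedges \omega_d) S(Q)^d \, d\mu$ in terms of a product of marginal tube-densities of $\mu$. This is precisely the content of endpoint multilinear Kakeya applied to the $d$ tube families read off from the marginals of $\mu$, and it yields the required constant $C = C(d)$, thereby producing an admissible $s$.

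The main obstacle will be executing the duality cleanly: verifying the minimax hypotheses and identifying the dual variable so that the supremum over $\calS$ on the primal side translates into exactly the maximal tube density on the dual side. A more hands-on alternative, closer to the original Carbery--Valdimarsson proof, would be to construct $s$ explicitly as a ``visibility weight'' via Guth's polynomial partitioning: pick a polynomial of degree comparable to $\|S\|_d$ whose zero set captures the high-mass cubes, and define $s(Q, \omega)$ in terms of how often a tube through $Q$ in direction $\omega$ meets the variety. The tube-sum constraint then reduces to a B\'ezout-type count, while the wedge-product bound follows by comparing the number of visible directions at $Q$ to the volume spanned. Either route ultimately reduces the theorem to the multilinear Kakeya inequality, which is taken as input.
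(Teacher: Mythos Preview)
The paper does not prove this theorem; it is quoted as background from \cite{Gut08, BouGut10, CarVal12}, with only the one-line remark that the factorising function $s$ ``is constructed in terms of the so-called visibility and directional surface area of a suitable polynomial hypersurface.'' Your second, hands-on alternative --- building $s$ as a visibility weight from a Guth-type polynomial variety and controlling the tube-sums by a B\'ezout/degree count --- is precisely this cited route, so that part of your proposal matches what the paper points to.

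Your primary proposal, deducing the factorisation from endpoint multilinear Kakeya by a minimax/duality argument, runs in the opposite direction from the cited proofs (which use visibility to get factorisation, and then factorisation to get multilinear Kakeya). The paper does note, in the discrete setting, that such a reverse implication can be made to work (see the reference to \cite{CarTan}), so the strategy is not unreasonable. But the sketch has substantive gaps beyond what you flag. The objective $\sup_{Q,\omega_1,\ldots,\omega_d}(\omega_1\wedges\omega_d)S(Q)^d/\prod_j s(Q,\omega_j)$ is not convex--concave in a form to which Sion applies directly; one has to pass to logarithms or recast the problem as a genuine convex programme before any minimax swap is legitimate, and you have not said how. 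More seriously, ``discretising $\bS^{d-1}$'' is not harmless here: the conclusion must hold for \emph{every} direction with a dimensional constant, so an $s$ built only on a net does not finish the job without a separate continuity or convexification argument to extend it to all of $\bS^{d-1}$. These steps are the actual content of the duality approach rather than technicalities, and your proposal does not address them.
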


The function $s$ that appears in the multilinear Kakeya factorisation theorem is constructed in terms of the so-called visibility and directional surface area of a suitable polynomial hypersurface that can associated to the given configuration of tubes.

\subsection{Results}\label{section:results1}

The multijoint problem is a discrete analogue of the multilinear Kakeya problem. This can be seen from \eqref{eq:06} by taking $\F=\R$, replacing the counting measure with the Lebesgue measure, the discrete wedge product (in the definition of $\delta$) with the absolute value of the Euclidean wedge product, and lines with 1-tubes. It was observed in \cite{CarVal12} (and implicity in \cite{Gut08,BouGut10}) that the multilinear Kakeya theorem follows from the multilinear Kakeya factorisation theorem, above. Although the multijoint problem was proved by Zhang, \cite{Zha16}, and more recently, higher-dimensional generaliations were proved by Tidor, Yu and Zhao, \cite{TYZ20}, a discrete analogue to the multilinear Kakeya factorisation theorem has remained unproven, until now. 

From now on we will fix our notation so that $n$ denotes the underlying spacial dimension and $d$ denotes the degree of multilinearity.

Suppose $V_1, \ldots, V_d$ are $k_1$-, $\ldots, k_d$-dimensional vector spaces in $\F^n$, respectively, where $k_1 + \ldots + k_d = n$. We define the \textbf{discrete wedge product} on these $k_j$-planes by
\[\wedge_{j=1}^dV_j := \wedge_{j=1}^d\wedge_{k=1}^{k_j} \omega_{j, k},\]
where $\omega_{j, 1}, \ldots, \omega_{j, k_j} \in \bS^{n-1}$ is a choice of $k_j$ linearly independent directions contained in $V_j$, for each $1\leq j \leq d$. We can now define the (\textbf{$k_j$-plane}) \textbf{multijoint kernel} by
\[\delta(p,\pi_1, \ldots, \pi_d) = \(\prod_{j=1}^d \chi_{\pi_j}(p)\)e(\pi_1)\wedges e(\pi_d),\]
for all $p\in \F^n$ and all $k_j$-planes $\pi_j$. Given sets of $k_j$-planes, $\Pi_j$, we say that $p$ is a $k_j$-\textbf{multijoint}, or multijoint in short, if there are planes $\pi_j\in \Pi_j$ so that $\delta(p,\pi_1, \ldots, \pi_d)=1$, and we say that the planes $\pi_j$ \textbf{form a multijoint at} $p$.

Let $1 \leq k \leq n$. Recall that the \textbf{Grassmannian with respect to $k$ and $\F^n$} is the set of all $k$-subspaces of $\F^n$, which we denote by $\Gr(k, \F^n)$. Given any affine $k$-plane $\pi$, let $e(\pi)\in \Gr(k, \F^n)$ denote the translate of $\pi$ that contains the origin.

We may now state our results as follows.
\begin{theorem}[Discrete Factorisation Theorem]\label{corollary:01}
Let $k_1 + \ldots + k_d = n$. For all finitely supported $S:\F^n\rightarrow \R_{\geq0}$ with $\norm{S}_{d}=1$, for each $1\leq j \leq d$ there exists a function $s_{k_j}: \F^n\times \Gr(k_j, \F^n) \rightarrow \R_{\geq0}$ for each $1\leq j \leq d$ so that
\[(V_1\wedges V_d )S(p)^d \lesssim_{n}\prod_{j=1}^ds_{k_j}(p,V_j),\]
 for all $p\in \F^n$ and $V_j\in\Gr(k_j, \F^n)$ for $1\leq j \leq d$, and so that for any $1\leq j \leq d$,
\[\sum_{p\in \pi_j}s_{k_j}(p,e(\pi_j)) \leq 1\] 
for all affine $k_j$-subspaces $\pi_j\subset \F^n$.
\end{theorem}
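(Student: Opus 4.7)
The plan is to prove the discrete factorisation theorem by the polynomial method, following the philosophy of Zhang's proof of the multijoint inequality \cite{Zha16} and its extension to higher-dimensional planes by Tidor, Yu and Zhao \cite{TYZ20}. The factorising functions $s_{k_j}$ will be built explicitly from a single auxiliary polynomial vanishing to high order on the support of $S$.

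First, I would reduce to an integer-weighted setting by approximating $S$: fix a large parameter $N$ and replace $S$ by an integer multiplicity function $m : \Supp(S) \to \N$ with $m(p) \approx N \cdot S(p)$, arranged so that $\sum_p m(p)^n$ is controlled by a constant times $N^n$ (using $\norm{S}_d = 1$ together with the constraint $k_1 + \ldots + k_d = n$). A standard parameter-counting argument on the space of polynomials of degree $\leq D$ on $\F^n$, comparing its dimension $\binom{D+n}{n}$ against the $\binom{m(p)+n-1}{n}$ linear conditions imposed by vanishing of order $m(p)$ at each $p$, then produces a nonzero polynomial $P$ of degree $D=O_n(N)$ vanishing to order at least $m(p)$ at every $p \in \Supp(S)$. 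All subsequent constructions are built from $P$.

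Next, for each $V \in \Gr(k_j, \F^n)$ and $p \in \F^n$, define
\[s_{k_j}(p, V) \;=\; \frac{1}{D^{k_j}}\,\mu_{k_j}(p, V),\]
where $\mu_{k_j}(p, V)$ records how $P$ vanishes along the affine plane $p + V$ near $p$. For lines ($k_j = 1$) this is the order of vanishing at $p$ of the univariate polynomial $P|_{p + V}$, with $\mu_1(p, V) := D$ when $P|_{p + V} \equiv 0$; for higher $k_j$ one takes an analogous multidimensional multiplicity. The wedge inequality is then a Taylor-expansion assertion: if the $V_j$'s span $\F^n$, one may assemble a frame of unit discrete wedge by selecting bases of each $V_j$, and the $m(p)$-th order vanishing of $P$ at $p$ forces the normalised directional multiplicities to combine multiplicatively to at least $S(p)^d$, up to a constant depending only on $n$.

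The main obstacle is the summation bound $\sum_{p \in \pi_j} s_{k_j}(p, e(\pi_j)) \leq 1$ for $k_j > 1$. In the line case this reduces to the one-dimensional Bezout estimate $\sum_p \mathrm{ord}_p(P|_\ell) \leq \deg(P)$, which has no direct higher-dimensional counterpart: a polynomial restricted to a $k$-plane with $k > 1$ may vanish identically on lower-dimensional subvarieties, so its pointwise multiplicities need not sum finitely. Overcoming this is the technical heart of the argument. I expect the correct choice of $\mu_{k_j}$ to come either from a slicing procedure---foliating $\pi_j$ by lines in a generic direction, applying the one-dimensional Bezout bound fibrewise, and symmetrising over foliations---or by intersecting the zero set of $P$ with $n - k_j$ further auxiliary polynomials cutting out a subvariety of complementary dimension, so that a Bezout intersection count in zero dimensions delivers the desired bound. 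Once $\mu_{k_j}$ is pinned down so that the wedge inequality and the summation bound hold simultaneously, the theorem follows.
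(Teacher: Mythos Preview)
Your approach is genuinely different from the paper's, and in its current form it does not close. The paper does \emph{not} build $s_{k_j}$ from a single auxiliary polynomial. Instead it uses the Tidor--Yu--Zhao handicap machinery: for each $k$-plane $\pi$ and handicap $\alpha$, one iteratively chooses sets $B(p,\pi,\alpha,\lambda)$ of derivative-evaluation functionals so that $\bigcup_{p\in\pi\cap J} B(p,\pi,\alpha,\lambda)$ is a basis of $(\F_\lambda[x_1,\ldots,x_k])^*$. Setting $\tilde S_k(p,\pi,\alpha,\lambda)=|B(p,\pi,\alpha,\lambda)|$ makes the summation identity $\sum_{p\in\pi}\tilde S_k(p,\pi,\alpha,\lambda)=\binom{\lambda+k}{k}$ hold \emph{by construction}, for every $k$-plane, so after normalising by $\binom{\lambda+k}{k}$ the sum bound is automatic. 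All the work then goes into the wedge inequality: a vanishing lemma (applied contrapositively to a hypothetical polynomial) gives $\sum_p\prod_j\tilde S_{k_j}(p,\pi_j(p),\alpha,\lambda)\geq\binom{\lambda+n}{n}$, and a perturbation argument selects a handicap $\alpha$ for which the quantities $S(p)^{-d}\prod_j \tilde S_{k_j}(p,\pi_j,\alpha,\lambda)/\binom{\lambda+k_j}{k_j}$ are essentially constant in $p$, forcing each to be $\gtrsim_n 1$. A limit $\lambda\to\infty$ finishes.

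Your proposal reverses the roles of the two inequalities: you build a polynomial making the wedge side plausible and hope to recover the sum bound afterwards. There are two concrete problems. First, your wedge claim already fails for $k_j>1$ under the natural reading of ``multidimensional multiplicity'': if $\mu_{k_j}(p,V)$ is the order of vanishing of $P|_{p+V}$ at $p$, then $\mu_{k_j}(p,V)\geq m(p)$ gives only $\prod_j s_{k_j}(p,V_j)\geq m(p)^d/D^n$, whereas you need $\gtrsim m(p)^d/N^d$; since $n>d$ when some $k_j>1$ and $D\sim N$, this is off by a factor $N^{n-d}$. Second, the sum bound is the genuine obstruction you identify, but neither of your suggested fixes is known to work: slicing by lines does not produce a quantity that is simultaneously additive along the plane and large enough pointwise to rescue the wedge inequality, and an auxiliary-polynomial B\'ezout scheme would have to be uniform over \emph{all} $k_j$-planes at once, not just finitely many. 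Even in the pure line case your convention $\mu_1(p,V)=D$ when $P|_{p+V}\equiv 0$ breaks the sum bound on any line contained in $\{P=0\}$. The paper sidesteps all of this by never committing to a polynomial: the factorising functions are dimension counts, and the sum bound is an exact basis identity rather than a degree estimate.
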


Theorem \ref{corollary:01} is precisely an analogue of the Multilinear Kakeya Factorisation Theorem. Restricting our attention to a particular choice of sets of $k_j$-planes, we arrive at a multijoint-specific factorisation theorem.
\begin{theorem}[Multijoint Factorisation Theorem]\label{theorem:2b}
Let $\Pi_1, \ldots, \Pi_d$ be sets of $k_1$-$, \ldots, k_d$-planes in $\F^n$, respectively, so that $k_1 + \ldots + k_d =n$. Let $J = \{p : \exists (\pi_j)_j \text{ so that } \delta(p,\pi_1, \ldots, \pi_d)=1\}$. For all finitely supported $S:J\rightarrow \R_{\geq0}$ with $\norm{S}_{d}=1$ there exists  a function $s_{k_j}: J\times \Gr(k_j, \F^n)\rightarrow \R_{\geq0}$   for each $1\leq j \leq d$ so that
\begin{equation}\label{thmeq:1}\delta(p,\pi_1, \ldots, \pi_d)S(p)^d \lesssim_{n}\prod_{j=1}^ds_{k_j}(p,e(\pi_j)),\end{equation}
 for all $p\in J$ and $(\pi_1, \ldots, \pi_d)\in \Pi_1\timess \Pi_d$, and so that
\begin{equation}\label{thmeq:2}\sum_{p\in \pi_j\cap J}s_{k_j}(p, e(\pi_j)) = 1\end{equation}
for all $\pi_j\in \Pi_j$ and all $1\leq j \leq d$. 
\end{theorem}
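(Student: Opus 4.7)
The plan is to deduce this from Theorem \ref{corollary:01} (the Discrete Factorisation Theorem) in two steps: restrict the factorising functions produced there to $J$, and then add corrective mass to force equality in the sum condition.

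First, extend $S$ to $\F^n$ by zero, which preserves $\norm{S}_d = 1$, and apply Theorem \ref{corollary:01} to obtain functions $\tilde s_{k_j} : \F^n \times \Gr(k_j, \F^n) \to \R_{\geq 0}$ satisfying $(V_1 \wedges V_d)\, S(p)^d \lesssim_n \prod_j \tilde s_{k_j}(p, V_j)$ and $\sum_{p \in \pi} \tilde s_{k_j}(p, e(\pi)) \leq 1$ for every affine $k_j$-plane $\pi$. Restrict to $J$ by defining $s'_{k_j}(p, V) := \tilde s_{k_j}(p, V)$ for $p \in J$. Whenever $\delta(p, \pi_1, \ldots, \pi_d) = 1$ we have $p \in J$ and $e(\pi_1) \wedges e(\pi_d) = 1$, so \eqref{thmeq:1} follows directly from the corresponding inequality for $\tilde s_{k_j}$; when $\delta = 0$ it is trivial. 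In the same way, $\sum_{p \in \pi \cap J} s'_{k_j}(p, e(\pi)) \leq 1$ for every $\pi \in \Pi_j$. We may assume without loss of generality that every $\pi \in \Pi_j$ meets $J$, since planes that do not contribute to any multijoint can be discarded without affecting $J$.

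Second, upgrade the inequality in the sum condition to an equality by adding corrective mass. For each $\pi \in \Pi_j$ choose a distinguished point $p_\pi \in \pi \cap J$ and set
\[
s_{k_j}(p, V) := s'_{k_j}(p, V) + \sum_{\substack{\pi \in \Pi_j \\ e(\pi) = V,\ p = p_\pi}} \Bigl( 1 - \sum_{q \in \pi \cap J} s'_{k_j}(q, e(\pi)) \Bigr).
\]
The crucial observation is that two distinct planes in $\Pi_j$ sharing a direction $V$ are parallel and therefore disjoint, so their distinguished points differ; hence the sum above contains at most one term for each $(p, V)$, and the added mass is non-negative by the first step. The equality \eqref{thmeq:2} then holds by construction, while \eqref{thmeq:1} is preserved because its right-hand side has only grown.

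The proof presents no serious obstacle beyond Theorem \ref{corollary:01} itself: Theorem \ref{theorem:2b} is essentially a consolidation of its conclusion to the specific sets $\Pi_j$ and the multijoint locus $J$. The only care required is the disjointness observation ensuring well-definedness of the correction, and the implicit constant in \eqref{thmeq:1} is inherited directly from Theorem \ref{corollary:01}.
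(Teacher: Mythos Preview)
Your reduction is internally consistent, but it is circular within the paper: Theorem~\ref{corollary:01} is not an independent input here but is itself deduced \emph{from} Theorem~\ref{theorem:2b}. The finite-$\Pi_j$ case of Theorem~\ref{theorem:2b} is established directly in Section~\ref{section:6.5}, and the following subsection then derives Theorem~\ref{corollary:01} from it. So invoking Theorem~\ref{corollary:01} to prove Theorem~\ref{theorem:2b} assumes the very statement you are trying to establish. Your corrective-mass step and the disjointness observation are both correct, and indeed this argument could be used to close the loop (finite Theorem~\ref{theorem:2b} $\Rightarrow$ Theorem~\ref{corollary:01} $\Rightarrow$ general Theorem~\ref{theorem:2b}); but it cannot replace the direct argument that gets the chain started.

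The paper's own proof of Theorem~\ref{theorem:2b} runs directly through the handicap machinery. For each $\lambda$, Lemma~\ref{lemma:kHandicap} supplies a handicap $\alpha$ for which the quantities
\[
w_p(\alpha)=\frac{1}{S(p)^d}\,\min_{\bm\pi:\,\delta(p,\bm\pi)=1}\ \prod_{j=1}^d \frac{\tilde S_{k_j}(p,\pi_j,\alpha,\lambda)}{{\lambda+k_j\choose k_j}}
\]
lie in a common interval of length $O(1/\lambda)$ over $p\in\Supp S$, and Corollary~\ref{corollary:4} forces that common value to be $\gtrsim_n 1$. Setting $s_{k_j,\lambda}(p,\pi_j)=\tilde S_{k_j}(p,\pi_j,\alpha,\lambda)/{\lambda+k_j\choose k_j}$ then gives \eqref{thmeq:1} up to an additive $O(1/\lambda)$ error, while \eqref{thmeq:2} holds \emph{exactly} for every $\lambda$ by the basis identity \eqref{eq:UpperBound}; a compactness argument as $\lambda\to\infty$ removes the error term. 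In particular the equality in \eqref{thmeq:2} is structural---it comes from the fact that the chosen vanishing conditions form a basis of $(\F_\lambda[x_1,\ldots,x_{k_j}])^*$---rather than from an ad hoc mass correction.
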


Note that in the case that the sets $\Pi_1, \ldots, \Pi_d$ are finite, the set of their multijoints is also finite and hence the finite support hypothesis on $S$ is redundant.

\begin{remark}\label{remark:suf}
Recall the linear operators $T_j$ from Section \ref{section:operators}. With $s$ given by Theorem \ref{theorem:2b},  inequality \eqref{thmeq:1} implies that $ST^{1/d} \lesssim T_1^{1/d}\cdots T_d^{1/d}$ and equation \eqref{thmeq:2} implies that $\norm{T_j}=1$ for each $1\leq j \leq d$. Hence the multijoint inequality, \eqref{eq:06}, follows from Theorem \ref{theorem:2b}.\demo
\end{remark}

In closely related work in collaboration with Carbery, \cite{CarTan}, we show in the case where $k_1, \ldots, k_d = 1$ (multijoints of lines) that Theorem \ref{theorem:2b} can be deduced from the assumption that \eqref{eq:06} holds. Together with Remark \ref{remark:suf}, this shows that the multijoint inequality, \eqref{eq:06}, and the multijoint factorisation theorem, Theorem \ref{theorem:2b}, are equivalent, at least in this special case.

\subsection{Overview of the Article}
In Section \ref{section:a2} we recall the notion of ``handicaps'', as introduced by Yu and Zhao in \cite{YuZha19}, and further developed in \cite{TYZ20}. Handicaps allow us to choose polynomial vanishing conditions so that we may apply the polynomial method while respecting the geometry of multijoints. 

In Section \ref{section:BG} we diverge from the approach in \cite{YuZha19, TYZ20} and adapt the recent novel developments introduced therein to establish our factorisation theorems.

\section*{Acknowledgements}
This research was supported by The Maxwell Institute Graduate School in Analysis and its Applications (MIGSAA), a Centre for Doctoral Training funded by the UK Engineering and Physical Sciences Research Council (grant EP/L016508/01), the Scottish Funding Council, Heriot-Watt University and The University of Edinburgh. I would like to thank the whole team at MIGSAA for creating such an excellent programme and fostering an inclusive and productive environment for mathematical research.

I am extremely grateful to my supervisor, Anthony Carbery, for his constant support throughout my time at MIGSAA and in preparing this article.

I would also like to thank James Wright for his help in formulating Theorem \ref{corollary:01} succinctly.

The results contained in this article are also detailed in the author's PhD thesis, \cite{Tang}.
\section*{Notation}
We write $A\lesssim B$ to mean that there is a non-negative constant $C$, depending only on dimension, so that $A\leq CB$. We write $A\lesssim_{n} B$ to mean that there is a constant $C$, depending only on $n$ and the dimension, so that $A\leq CB$. We write $B\gtrsim A$ and $B\gtrsim_{n} A$ to mean $A\lesssim B$ and $A \lesssim_{n}B$, respectively. Moreover, by $A\sim B$, we mean $A\lesssim B$ and $B \lesssim A$, and finally, by $A\sim_{n} B$, we mean $A\lesssim_{n}B$ and $B\lesssim_{n} A$.
\section{Polynomials, Handicaps and Vanishing Conditions}\label{section:a2}
\subsection{The Polynomial Method}
 This section introduces the main tools we will employ, and motivates the arguments that follow.

We will work in an arbitrary field, $\F$, and any derivative will be the \textbf{Hasse} derivative. All arguments remain valid when $\F=\R$ with the usual derivative operator. 

Let $f\in \F[x_1, \ldots, x_n]$ and $\alpha\in \N^n$ be a multi-index.  The $\alpha$-th Hasse derivative of $f$ is the coefficient of the monomial $z^\alpha = z_1^{\alpha_1}\cdots z_n^{\alpha_n}$ in the polynomial $p(x+z)\in \(\F[x_1, \ldots, x_n]\)[z_1, \ldots,z_n]$. This is denoted by $D^\alpha f$. For further details see \cite{Hasse} or \cite{CarIli20}.

Recall that for any field $\F$ and $\lambda \in \N$, 
\begin{equation}\label{eq:dim}
\dim  \F_\lambda [x_1, \ldots, x_n] = {\lambda + n \choose n},\end{equation}
 and hence, $\dim \(\(\F_\lambda [x_1, \ldots, x_n]\)^*\) = {\lambda + n \choose n}$. From this fact, we can deduce a commonly used consequence, known as the parameter counting lemma. Indeed, checking whether a (Hasse) derivative of a low degree polynomial at a point is zero is equivalent to checking whether $\langle \phi, f\rangle=0$ for an appropriately chosen $\phi \in \(\F_\lambda [x_1, \ldots, x_n]\)^*$. Therefore, the following so-called parameter counting Lemma follows from elementary linear algebra.
\begin{lemma}[Parameter Counting]
Let $\phi_1, \ldots, \phi_m$ be homogeneous linear functionals which act on $\F_\lambda[x_1, \ldots, x_n]$. If $m < {n + \lambda \choose n}$, then there is a non-zero $f\in \F_\lambda[x_1, \ldots, x_n]$ so that $\langle \phi_i, f\rangle=0$, for each $1\leq i \leq m$.
\end{lemma}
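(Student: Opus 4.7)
The plan is entirely linear-algebraic and essentially a rank-nullity argument, leveraging the dimension formula \eqref{eq:dim} already recorded just above the statement. First I would assemble the $m$ given functionals into a single linear map
\[
\Phi : \F_\lambda[x_1, \ldots, x_n] \longrightarrow \F^m, \qquad \Phi(f) = \bigl(\langle \phi_1, f\rangle, \ldots, \langle \phi_m, f\rangle\bigr).
\]
The conclusion of the lemma is exactly the statement that $\ker \Phi$ contains a non-zero element, so the task reduces to showing $\ker \Phi \neq \{0\}$.

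Next I would invoke the rank-nullity theorem for finite-dimensional vector spaces over the arbitrary field $\F$. By \eqref{eq:dim} the domain has dimension $\binom{n+\lambda}{n}$, while $\dim \im \Phi \leq m$ simply because the image is a subspace of $\F^m$. Hence
\[
\dim \ker \Phi \;=\; \binom{n+\lambda}{n} - \dim \im \Phi \;\geq\; \binom{n+\lambda}{n} - m \;>\; 0,
\]
where the strict inequality uses the hypothesis $m < \binom{n+\lambda}{n}$. Any non-zero element of $\ker \Phi$ furnishes the required polynomial $f$.

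There is no real obstacle here: the only thing worth highlighting is that rank-nullity and the dimension formula \eqref{eq:dim} are valid over an arbitrary field, so the argument is insensitive to whether $\F$ is $\R$, a finite field, or something else — which matches the setting declared at the start of Section \ref{section:a2}. For completeness I would remark that the homogeneity hypothesis on the $\phi_i$ is what makes $\Phi$ a linear map (as opposed to merely affine), which is the only place that assumption is used.
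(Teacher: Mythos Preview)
Your proof is correct and is exactly the elementary linear-algebra argument the paper has in mind; the paper does not give a detailed proof but simply notes that the lemma ``follows from elementary linear algebra'' after recording the dimension formula \eqref{eq:dim}, and your rank--nullity computation is the natural way to spell that out.
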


We organise our arguments in such a way that the only dimensional constants that appear in our stated results arise directly from the implied constants in the inequalities described by ${n + \lambda \choose n} \sim \lambda^n$, or, for any positive $k_1, \ldots, k_d $,
\[{k_1 + \lambda\choose k_1} \cdots {k_d + \lambda \choose k_d} \sim_n \lambda^{k_1 + \ldots + k_d}.\]

\subsection{Handicaps}\label{section:2.1}
For any finite subset $J\subseteq \F^n$, a \textbf{handicap} is a function $\alpha : J \rightarrow \Z$. We will use $\alpha$ to equip $J\times \Z_{\geq 0}$ with a linear order so that it has a least element. We will then accumulate vanishing conditions as we increment along this linear order, starting from the least element.
\begin{remark}
The handicap keeps track of degrees of freedom in choosing vanishing conditions. More precisely, given a fixed $p_0\in J$, the degrees of freedom are the $(\abs{J}-1)$ entries of $(\alpha_{p_0}-\alpha_p)_{p\in J\setminus \{p\}}$. \demo
\end{remark}
Since $J$ is finite, we may equip it with a total order so that for $p, q\in J$, at most one of $p < q$, $p=q$ or $p>q$ holds. 
We define a total order (called the \textbf{priority order} in \cite{TYZ20}), denoted by $\prec$, on $J\times \Z_{\geq 0}$ as follows. We say $(p,r) \prec (p^\prime, r^\prime)$ if
\begin{itemize}
\item $r-\alpha_p < r^\prime - \alpha_{p^\prime}$, or
\item $r-\alpha_p = r^\prime - \alpha_{p^\prime}$ and $p<p^\prime$.
\end{itemize}
Moreover, $(p,r)=(p^\prime, r^\prime)$ if and only if $p=p^\prime$ and $r=r^\prime$. We write $\preceq$ to allow for this equality case.
%%%%%%%%%%%%%%%%%%
%%%%%%%%%%%%%%%%%%

\subsection{Vanishing Conditions}\label{section:6.2}
Let $\Pi_1, \ldots, \Pi_d$ be finite sets of $k_1$-$, \ldots, k_d$-dimensional planes in $\F^n$, respectively, where $k_1 + \ldots + k_d= n$. Let $J = \{p : \exists (\pi_j)_j \text{ so that }\delta(p, \pi_1 \ldots, \pi_d)=1\}$ be the associated set of multijoints. Let $\lambda \in \N$ be a parameter and let $\alpha : J \rightarrow \Z$ be a handicap. Choose an ordering of the set  $J$ and thereafter, define the total order $\prec = \prec_\alpha$ on $J\times\Z_{\geq0}$, as in Section \ref{section:2.1}.

Let $\pi\subset \F^n$ be a $k$-plane which we fix for the remainder of Section \ref{section:6.2}. Without loss of generality, we assume that $\pi$ is spanned by the coordinate vectors $e_1, \ldots, e_k$, so we make the identification
\begin{equation}\label{eq:pi}\left\{f\rvert_\pi : f\in \F[x_1, \ldots, x_n]\right\} = \F[x_1, \ldots, x_k].\end{equation}
For each $(p,r)$, we define $\B_r(p,\pi,\lambda)$ to be the vector space of linear functionals which act on $\F_\lambda[x_1, \ldots, x_k]$ of the form $f\mapsto Df(p)$ for some differential operator $D$, of order $r$, acting on $k$-variate polynomials.

With $\alpha$, $\lambda$ and $\pi$ fixed, we now define sets $B(p,\pi, \alpha, \lambda)$ for $p\in \pi\cap J$. To do so, we perform an iterative procedure starting with the $\prec$-least element of $(\pi\cap J)\times \Z_{\geq0}$ and proceeding to the $\prec$-next element on each iteration. Starting with $(p,0)$, the least element of $(\pi\cap J)\times \Z_{\geq0}$, we choose a set $B_0(p,\pi,\alpha,\lambda)\subset \B_0(p,\pi,\lambda)$ that is a basis for $\B_0(p,\pi,\lambda)$. 

Assume for each $(p^\prime,r^\prime)\prec(p,r)$, we have chosen sets $B_{r^\prime}(p^\prime,\pi,\alpha,\lambda)$ so that the disjoint union, $\cup_{(p^\prime, r^\prime)\prec (p,r)} B_{r^\prime}(p^\prime, \pi, \alpha, \lambda)$ is a basis for $\lspan_{(p^\prime, r^\prime)\prec (p,r)} \B_{r^\prime}(p^\prime, \pi, \lambda)$. Thereafter, choose $B_r(p,\pi,\alpha,\lambda) \subset \B_r(p,\pi,\lambda)$ so that the disjoint union 
\[\bigcup_{(p^\prime, r^\prime)\preceq (p,r)} B_{r^\prime}(p^\prime, \pi, \alpha, \lambda)\]
 is a basis for $\lspan_{(p^\prime, r^\prime)\preceq (p,r)} \B_{r^\prime}(p^\prime, \pi, \lambda)$. 
\begin{remark}\label{remark:03}
Writing these unions out with more complete notation,
\[\bigcup_{(p^\prime, r^\prime)\preceq (p,r)} B_{r^\prime}(p^\prime, \pi, \alpha, \lambda) = \bigcup_{\substack{(p^\prime, r^\prime) \in (\pi\cap J) \times \Z_{\geq0}:\\(p^\prime, r^\prime)\preceq (p,r)}} B_{r^\prime}(p^\prime, \pi, \alpha, \lambda).\]
Observe that we can write these unions as
\[ \bigcup_{\substack{p^\prime \in (p + e(\pi))\cap J, \, r^\prime \in \Z_{\geq0}:\\(p^\prime, r^\prime)\preceq (p,r)}} B_{r^\prime}(p^\prime, \pi, \alpha, \lambda),\]
since $\pi = p+e(\pi)$ for any $k$-plane $\pi$. Since the first argument of $B$ is $p$, the only information this construction requires from the second argument of $B$ is which vector space is parallel to $\pi$, namely, $e(\pi)\in \Gr(k,\F^n)$. It follows that for this $k$-plane construction, we may also reduce the second argument to $e(\pi)$, although for notational convenience, we again continue to simply write $B(p,\pi,\alpha,\lambda)$.

Further observe that for any other $k$-plane $\pi^\prime$ so that $\pi\cap J = \pi^\prime \cap J$, we have that $B(p,\pi,\alpha,\lambda) = B(p,\pi^\prime, \alpha,\lambda)$ for all $p\in \pi\cap J$.
\demo
\end{remark}

Let
\[B(p,\pi,\alpha,\lambda) = \bigcup_{r\geq 0} B_r(p,\pi,\alpha,\lambda).\]
By construction, the disjoint union
\[\bigcup_{p\in \pi\cap J} B(p,\pi,\alpha,\lambda)\]
is a basis for the space of linear functionals on $\F_\lambda[x_1, \ldots, x_k]$. Hence, by \eqref{eq:dim},
\begin{equation}\label{eq:020}\sum_{p\in \pi\cap J}\abs{B(p,\pi,\alpha,\lambda)} = \dim \F_{\lambda}[x_1, \ldots, x_k] = {\lambda+k \choose k}.\end{equation}

Equation \eqref{eq:020} replaces the use of the Fundamental Theorem of Algebra to bound the number of zeros of a polynomial, which we often see in polynomial method arguments such as in \cite{Dvi08, Qui09}. 

Let 
\[\tilde S_k(p,\pi, \alpha,\lambda) = \abs{B(p,\pi, \alpha, \lambda)}\]
 for  $\pi \in \cup_{j : k_j = k}\Pi_j$ and $p\in \pi\cap J$. Our analysis will only consider pairs $(p,\pi)$ so that $p\in \pi$. Therefore, it is not necessary to define $\tilde S_k$ for pairs such that $p\not\in \pi$. These quantities remain central to our analysis. Moreover, it follows  immediately from \eqref{eq:020} that
 \begin{equation}\label{eq:UpperBound}\sum_{p\in \pi\cap J}\tilde S(p,\pi,\alpha,\lambda) = \dim \F_{\lambda}[x_1, \ldots, x_k] = {\lambda+k \choose k}.\end{equation}
Given a polynomial $f\in \F_\lambda[x_1,\ldots x_k]$, each set $B(p,\pi,\alpha,\lambda)$ indexes the \textbf{vanishing conditions}, $\langle \phi, f\rangle =0$ for $\phi \in \B(p,\pi,\alpha,\lambda)$.  A vanishing condition is a condition of the form $\langle \phi, f\rangle =0$ where $\phi$ is a linear form on $\F[x_1, \ldots, x_k]$. Such conditions include evaluation at a point, or evaluation of derivatives at a point.

We further note that the sets $B$ are translation-invariant with respect to $\alpha$.

\begin{proposition}[Translation Invariance]\label{proposition:3}
Let $\pi$ be a $k$-plane. Let $\alpha$ be a handicap and $c\in \Z$. Let $(\alpha + c)_p := \alpha_p + c$ for all $p\in J$. Then
\[B(p,\pi,\alpha,\lambda) = B(p,\pi,\alpha + c, \lambda)\]
for all $p\in (l\cap J)$. Furthermore, $\tilde S(p,\pi,\alpha,\lambda) = \tilde S(p,\pi,\alpha+c, \lambda)$.
\end{proposition}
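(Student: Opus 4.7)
The proof will be essentially a direct observation that the construction of $B(p,\pi,\alpha,\lambda)$ depends on $\alpha$ \emph{only} through the induced priority order $\prec_\alpha$, and that this priority order is invariant under a uniform additive shift of the handicap. So the plan is to first argue $\prec_\alpha = \prec_{\alpha+c}$, then push this through the iterative construction.

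For the first step, I would note that for any $(p,r),(p',r')\in (\pi\cap J)\times \Z_{\geq0}$,
\[
r - (\alpha_p+c) = (r-\alpha_p) - c \quad\text{and}\quad r' - (\alpha_{p'}+c) = (r'-\alpha_{p'}) - c,
\]
so subtracting these two quantities gives the same value as $(r-\alpha_p)-(r'-\alpha_{p'})$. Hence the strict inequality $r-\alpha_p<r'-\alpha_{p'}$ holds with respect to $\alpha$ if and only if it holds with respect to $\alpha+c$, and the tie-breaking rule $p<p'$ on $J$ does not involve the handicap at all. It follows that the two priority orders $\prec_\alpha$ and $\prec_{\alpha+c}$ coincide as total orders on $(\pi\cap J)\times\Z_{\geq0}$.

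For the second step, I would recall that the iterative construction of the sets $B_r(p,\pi,\alpha,\lambda)$ proceeds by enumerating $(\pi\cap J)\times\Z_{\geq0}$ in priority order, starting from the $\prec$-least element, and, at each stage, extending the previously chosen collection to a basis of the span of the spaces $\B_{r'}(p',\pi,\lambda)$ indexed by the already-visited pairs $(p',r')$. Crucially, the vector spaces $\B_r(p,\pi,\lambda)$ do not depend on $\alpha$ at all. Therefore, running the construction with handicap $\alpha+c$ visits the pairs in the same order and at each stage asks for a basis extension of exactly the same subspace as was required with handicap $\alpha$. We may therefore make the same choice of basis extension at each step in both constructions, giving $B_r(p,\pi,\alpha+c,\lambda) = B_r(p,\pi,\alpha,\lambda)$ for every $r$, and consequently $B(p,\pi,\alpha+c,\lambda) = B(p,\pi,\alpha,\lambda)$ for every $p\in \pi\cap J$. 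Taking cardinalities yields $\tilde S(p,\pi,\alpha+c,\lambda) = \tilde S(p,\pi,\alpha,\lambda)$.

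There is no real obstacle here; the only mild subtlety is that the construction of $B$ involves an arbitrary choice of basis extension at each step, so one should be slightly careful to phrase the conclusion as: the sets produced by the $\alpha$-construction are \emph{a valid output} of the $(\alpha+c)$-construction (and vice versa). This is harmless because the statement of the proposition is naturally read under the convention that the same choices are made in both runs, which is always possible precisely because the priority orders and the underlying spaces $\B_r(p,\pi,\lambda)$ agree.
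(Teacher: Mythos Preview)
Your proof is correct and is exactly the natural argument: the priority order $\prec_\alpha$ depends on $\alpha$ only through the differences $r-\alpha_p$, which are all shifted by the same constant under $\alpha\mapsto\alpha+c$, so the order is unchanged; since the spaces $\B_r(p,\pi,\lambda)$ are independent of $\alpha$, the iterative construction produces identical outputs. The paper itself does not give a proof of this proposition---it is stated as an observation and left to the reader---so there is nothing further to compare; your write-up simply supplies the details the paper omits. Your remark about the arbitrary basis-extension choices is also well taken: the cardinalities $\tilde S$ are independent of these choices in any case, since they are determined by dimensions of the successive spans.
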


\subsection{The Vanishing Lemma}
The implied constant $C = C(k_1, \ldots, k_d)$ in our Theorem \ref{theorem:2b} is derived from the polynomial method.
A traditional application of the polynomial method would find a non-zero polynomial $f\in \F[x_1, \ldots, x_d]$ of low degree which vanishes at every point of $J$. More generally, we may ask that $f$ satisfies vanishing conditions, $\langle \phi, f\rangle =0$ for some $\phi \in (\F[x_1, \ldots, x_d])^*$.  

It was observed in \cite{TYZ20} that choosing vanishing conditions in the traditional way is naïve, and unwittingly imposes more vanishing conditions than necessary. Following \cite{TYZ20}, we will begin by choosing vanishing conditions, with some degrees of freedom, in an optimal way such that a vanishing lemma remains valid. Once the vanishing lemma is established, the use of polynomials is concluded. We then show that these vanishing conditions were chosen in a way that satisfies desirable uniform boundedness, monotonicity and continuity properties with respect to these degrees of freedom. This allows for a heuristically simple perturbation argument and we conclude the argument by choosing vanishing conditions in a way that respect the geometry of the particular multijoint configuration with which we are working. In Section \ref{section:GoodHandicap}, our results diverge from \cite{TYZ20}. We make a different choice of handicap, and with it we ultimately we establish the discrete Bourgain--Guth theorem.

The sets $B(p,\pi,\alpha,\lambda)$, as constructed in Section \ref{section:6.2} above, are comprised of dual maps of the form $Df(p)$ for some derivative operators which act on polynomials restricted to $\pi$. Let us abuse notation and write $D\in B(p,\pi,\alpha,\lambda)$ to mean a derivative operator $D$, so that $D f(p) = \langle \phi, f\rangle$, for some $\phi \in B(p,\pi,\alpha,\lambda)$. Then, it is a cornerstone of the Tidor--Yu--Zhao handicap construction that for any fixed $\alpha$ and $\lambda$, the vanishing conditions indexed by $B_j(p,\pi_j,\alpha,\lambda)$, for $p\in J$ and $\pi_j\in\Pi_j$, satisfy a vanishing lemma.

\begin{lemma}[Vanishing Lemma, {\cite[Lemma 5.8]{TYZ20}}]\label{lemma:13}
Fix $\alpha$ and $\lambda$. For each $1\leq j \leq d$, $\pi_j\in \Pi_j$ and $p\in J$, build the sets $B(p,\pi_j) = B(p,\pi_j, \alpha,\lambda)$ and $B_r(p,\pi_j) = B_r(p,\pi_j, \alpha,\lambda)$ as above, in Section \ref{section:6.2}. For each $p\in J$, choose planes $\pi_1(p)\in\Pi_1, \ldots, \pi_d(p)\in \Pi_d$ so that $\delta(p,\bm{\pi}(p))=1$. If $f\in \F_\lambda[x_1, \ldots, x_n]$ is non-zero then there exists $p\in J$ and $D_1\in B(p,\pi_1(p)), \ldots, D_d\in B(p,\pi_d(p))$ so that 
\[D_1\cdots D_d f(p) \neq 0.\]
\end{lemma}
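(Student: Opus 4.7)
The plan is a proof by contradiction, following the Tidor--Yu--Zhao template. Suppose $f\in\F_\lambda[x_1,\ldots,x_n]$ is non-zero yet annihilated by every product functional $f\mapsto (D_1\cdots D_d f)(p)$ with $p\in J$ and $D_j\in B(p,\pi_j(p))$; the goal is to force $f=0$. The geometric starting point is that at each multijoint $p$, since $k_1+\ldots+k_d=n$ and $\delta(p,\bm{\pi}(p))=1$, the directions $e(\pi_j(p))$ give a direct-sum decomposition $\F^n=\bigoplus_{j=1}^d e(\pi_j(p))$. In coordinates adapted to this decomposition, the composition $D_1\cdots D_d$ is an honest mixed partial derivative on $\F_\lambda[x_1,\ldots,x_n]$ evaluated at $p$, so the hypothesised product vanishing conditions are genuine linear vanishing conditions on $\F_\lambda[x_1,\ldots,x_n]$, not merely on one-directional restrictions.

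The combinatorial engine driving the contradiction is the construction of $B(p,\pi,\alpha,\lambda)$ in Section \ref{section:6.2}: for each fixed $k$-plane $\pi$, the disjoint union $\bigsqcup_{p\in\pi\cap J}B(p,\pi,\alpha,\lambda)$ is a basis for the dual of $\F_\lambda[x_1,\ldots,x_k]$, assembled incrementally in the priority order $\prec_\alpha$, with each new batch $B_r(p,\pi,\alpha,\lambda)$ extending the span of all earlier batches. The plan is to assign to each $p\in J$ a multi-order $(r_1(p),\ldots,r_d(p))$ witnessing a hypothetical non-vanishing product derivative of $f$, and to pick an instance that is $\prec$-minimal in each of the $d$ factors simultaneously. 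The basis-extension property then guarantees that the highest-order pure derivative along each $\pi_j$ appearing in the selected instance is linearly independent, inside $\B_{r_j}(p,\pi_j(p),\lambda)$, of all functionals at $\prec$-earlier positions; hence the product vanishing condition at that minimal instance cannot be reduced to product vanishing conditions already established lower down.

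The main obstacle is reconciling the $d$ separate priority orders (one per plane class $\Pi_j$) into a single well-founded induction. Concretely, when one tries to ``solve'' for the top-order mixed derivative using the product vanishing condition at $p$, one must re-express everything as functionals on the full polynomial ring $\F_\lambda[x_1,\ldots,x_n]$ whose linear independence is verifiable, rather than on a single one-directional restriction. This is precisely where the product structure $\F^n=\bigoplus_j e(\pi_j(p))$ interacts non-trivially with the priority orders on each factor, and where the translation invariance recorded in Proposition \ref{proposition:3} allows one to align handicaps across different planes without disturbing the sets $B$. Once this synchronisation is arranged, a descent on the priority order (well-founded by finiteness of $J$ and of the polynomial degree $\lambda$) forces every coefficient of $f$ to vanish, yielding $f=0$ and contradicting the initial assumption.
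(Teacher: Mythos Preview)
The paper does not give its own proof of this lemma; it is imported as \cite[Lemma~5.8]{TYZ20} and used as a black box, so there is no in-paper argument to compare against beyond the citation.

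Your sketch has the correct architecture (contradiction, pick an extremal non-vanishing instance, invoke the basis-extension property of the $B_r$'s), but two points need correction. First, there are not ``$d$ separate priority orders'': $\prec$ is defined once on $J\times\Z_{\ge 0}$ from $\alpha$ and the fixed ordering of $J$, independently of any plane, and the construction of $B(p,\pi_j)$ for each $j$ uses that \emph{same} $\prec$ restricted to $(\pi_j\cap J)\times\Z_{\ge 0}$. The synchronisation you worry about is therefore largely built in, not an obstacle to be overcome. Second, Proposition~\ref{proposition:3} is not relevant here; translation invariance shifts the single global handicap by a constant and has nothing to do with ``aligning handicaps across different planes'' (there is only one handicap throughout).

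The genuine gap is the descent step you name but do not execute. When $D_j\in\B_{r_j}(p,\pi_j(p))$ fails to lie in $B_{r_j}(p,\pi_j(p))$, the basis property lets you write $D_j$ as a combination of functionals supported at pairs $(p',r')\prec(p,r_j)$ with $p'\in\pi_j(p)\cap J$. Applying this to $\bigl(\prod_{i\ne j}D_i\bigr)f$ restricted to $\pi_j(p)$ moves the evaluation point to $p'$, but the remaining operators $D_i$ are derivatives along $\pi_i(p)$, not along $\pi_i(p')$; after re-expressing everything in the splitting $\F^n=\bigoplus_i e(\pi_i(p'))$ the new orders $r_i'$ can redistribute arbitrarily subject only to a total-degree constraint, and it is not automatic that $\max_i(p',r_i')\prec\max_i(p,r_i)$. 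Your proposal does not specify which scalar quantity on $(p,r_1,\ldots,r_d)$ strictly decreases under this replacement, so the induction is not yet well-founded. Supplying that invariant and checking it survives the change of point and change of coordinate splitting is exactly the technical content of the Tidor--Yu--Zhao argument, and it is absent here.
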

\begin{corollary}\label{corollary:4}
Fix $\alpha$ and $\lambda$. For each $1\leq j \leq d$, $\pi_j\in \Pi_j$ and $p\in J$, build the sets $B(p,\pi_j, \alpha, \lambda)$ as above, in Section \ref{section:6.2}. For each $p\in J$, choose planes $\pi_1(p) \in \Pi_1, \ldots, \pi_d(p) \in \Pi_d$ so that $\delta(p,\bm{\pi}(p)) =1$. Let $\tilde S_{k_j}(p,\pi_j,\alpha, \lambda) := \abs{B(p,\pi_j, \alpha, \lambda)}$. Then
\[\sum_{p\in J} \prod_{j=1}^d\tilde S_{k_j}(p,\pi_j(p),\alpha,\lambda) \geq {\lambda + n\choose n}.\]
\end{corollary}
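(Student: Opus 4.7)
The plan is to obtain the corollary from the Vanishing Lemma by a parameter-counting contrapositive. The key observation is that the inequality to be established is, numerically, a comparison between a total count of vanishing conditions (the left-hand side) and the dimension of the polynomial space $\F_\lambda[x_1,\ldots,x_n]$ (the right-hand side, via \eqref{eq:dim}). So I will argue by contradiction: if the count of vanishing conditions were strictly smaller than the dimension, parameter counting would produce a non-zero polynomial satisfying all of them, which the Vanishing Lemma forbids.

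In more detail, for each $p\in J$ pick the planes $\pi_j(p)\in\Pi_j$ as in the statement, and consider the collection of linear functionals on $\F_\lambda[x_1,\ldots,x_n]$ of the form
\[ f \longmapsto D_1\cdots D_d f(p), \qquad p\in J,\ D_j\in B(p,\pi_j(p),\alpha,\lambda), \]
where for each $j$ the derivative operator $D_j$ ranges over the (finite) index set of $B(p,\pi_j(p),\alpha,\lambda)$ of cardinality $\tilde S_{k_j}(p,\pi_j(p),\alpha,\lambda)$. Counting with multiplicity, the total number of such functionals is exactly
\[ m := \sum_{p\in J}\prod_{j=1}^d \tilde S_{k_j}(p,\pi_j(p),\alpha,\lambda). \]
Each such functional is genuinely a linear functional on $\F_\lambda[x_1,\ldots,x_n]$ (composition of the derivative operators $D_1\cdots D_d$ followed by evaluation at the point $p$), so the parameter counting lemma applies.

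Now suppose for contradiction that $m < \binom{\lambda+n}{n}$. By the parameter counting lemma together with \eqref{eq:dim}, there exists a non-zero $f\in \F_\lambda[x_1,\ldots,x_n]$ such that $D_1\cdots D_d f(p)=0$ for every $p\in J$ and every choice of $D_j\in B(p,\pi_j(p),\alpha,\lambda)$. But Lemma \ref{lemma:13}, applied with the same planes $\pi_j(p)$ forming a multijoint at each $p$, asserts that for any non-zero $f$ of degree at most $\lambda$ there exist $p\in J$ and $D_j\in B(p,\pi_j(p))$ such that $D_1\cdots D_d f(p)\neq 0$. This is a direct contradiction, so we must have $m\geq \binom{\lambda+n}{n}$, as desired.

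There is no real obstacle here: the hard work has already been done in constructing the handicap-based bases $B(p,\pi,\alpha,\lambda)$ and in proving the Vanishing Lemma. The only minor point to be careful about is that we are counting linear functionals with multiplicity across different $p$, but the parameter counting lemma is stated precisely for a list of $m$ functionals without any independence hypothesis, so this is harmless.
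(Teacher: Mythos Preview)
Your proof is correct and follows exactly the same approach as the paper: argue by contradiction, use parameter counting to produce a non-zero polynomial of degree at most $\lambda$ annihilated by all the functionals $f\mapsto D_1\cdots D_d f(p)$, and then invoke Lemma~\ref{lemma:13} for the contradiction. The paper's proof is simply a terser version of what you wrote.
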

\begin{proof}
For a contradiction, suppose that the conclusion is false. By parameter counting, there exists a non-zero $f\in \F_\lambda[x_1, \ldots, x_n]$ so that 
\[D_1 \cdots D_d f(p)=0\]
for all $D_j\in B(p,\pi_j(p),\alpha,\lambda)$, and all $1\leq j \leq d$, contrary to Lemma \ref{lemma:13}.
\end{proof}
This application of the vanishing lemma concludes the use of polynomials in our argument and it remains to prove that there exists a ``good'' handicap.

\subsection{Handicap Properties}\label{section:handicaps}
In this subsection we recall and remark upon the uniform boundedness, monotonicity and continuity of the numbers $\tilde S(p,l,\alpha,\lambda)$ with respect to $\alpha$, as stated below. We use this case to highlight the relation between the handicap argument and our discrete Bourgain--Guth theorem.

In the case of lines $l$, it is possible to deduce the aforementioned properties of $\tilde S(p,l,\alpha,\lambda)$ from the definition of $\prec$, alone, without reference to polynomials, or vector spaces thereof. 

The advantages of using the new Tidor--Yu--Zhao approach are twofold. Firstly, the choice of vanishing conditions varies nicely with respect to the handicap. Secondly, interpreting vanishing conditions as elements of a dual space allows us to establish the uniform equality \eqref{eq:UpperBound}, which counts the number of points ``with multiplicity'' on a plane.

The good properties of handicaps are as follows, and are proved in \cite{TYZ20}.

\begin{lemma}[$k$-Plane Uniform Boundedness]\label{lemma:kUniformBoundedness}
Let $\lambda \in \N$ and $\alpha : J \rightarrow \Z$ be a handicap. If $\alpha$ is such that $\alpha_p < \alpha_q - \lambda$ for some $p,q\in \pi \cap J$, then $\tilde S(p,\pi, \alpha,\lambda)=0$.
\end{lemma}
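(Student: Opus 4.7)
The plan is to show that under the hypothesis $\alpha_p < \alpha_q - \lambda$, all vanishing conditions in the dual of $\F_\lambda[x_1,\ldots,x_k]$ are exhausted at $q$ before the construction ever reaches $p$, leaving nothing to assign to $p$.

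First I would translate the hypothesis into a statement about the priority order. Given $\alpha_q - \alpha_p > \lambda$, for any $r \geq 0$ and any $r' \in \{0, 1, \ldots, \lambda\}$, the quantity $(r - \alpha_p) - (r' - \alpha_q) = (r - r') + (\alpha_q - \alpha_p)$ is bounded below by $(r - r') + \lambda + 1 \geq 1$ (using $r' \leq \lambda$ and $r \geq 0$), so it is strictly positive. Hence $(q, r') \prec (p, r)$ for every such pair. In particular, by the $\prec$-step at which we are about to select $B_r(p, \pi, \alpha, \lambda)$, the sets $B_{r'}(q, \pi, \alpha, \lambda)$ for all $r' = 0, 1, \ldots, \lambda$ have already been chosen.

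Next I would observe that $\bigcup_{r'=0}^{\lambda} \B_{r'}(q, \pi, \lambda)$ already spans the entire dual space $(\F_\lambda[x_1, \ldots, x_k])^*$. This is exactly the content of the Hasse--Taylor expansion: for $f \in \F_\lambda[x_1, \ldots, x_k]$ one has $f(q+z) = \sum_{|\beta| \leq \lambda} D^\beta f(q) \, z^\beta$, so the $\binom{\lambda + k}{k}$ linear functionals $f \mapsto D^\beta f(q)$ with $|\beta| \leq \lambda$ are a basis of the dual. Each such functional lies in some $\B_{r'}(q, \pi, \lambda)$ with $r' = |\beta| \leq \lambda$.

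Combining these two observations, by the defining property of the iterative construction, $\bigcup_{(p', r') \prec (p, r)} B_{r'}(p', \pi, \alpha, \lambda)$ is a basis of $\lspan_{(p', r') \prec (p, r)} \B_{r'}(p', \pi, \lambda)$, and this span already contains (hence equals) the full dual $(\F_\lambda[x_1, \ldots, x_k])^*$. Consequently, extending this basis by elements of $\B_r(p, \pi, \lambda)$ adds nothing new, forcing $B_r(p, \pi, \alpha, \lambda) = \emptyset$ for every $r \in \{0, 1, \ldots, \lambda\}$. For $r > \lambda$ the space $\B_r(p, \pi, \lambda)$ is already trivial since order-$r$ Hasse derivatives annihilate $\F_\lambda[x_1, \ldots, x_k]$, so $B_r(p, \pi, \alpha, \lambda) = \emptyset$ as well. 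Therefore $B(p, \pi, \alpha, \lambda) = \emptyset$ and $\tilde S(p, \pi, \alpha, \lambda) = 0$, as claimed. There is no essential obstacle here: the argument is a direct unwinding of the priority order together with the fact that order-$\leq \lambda$ Hasse derivatives at a single point already span the dual of $\F_\lambda[x_1, \ldots, x_k]$.
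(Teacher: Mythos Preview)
Your argument is correct. The paper does not actually supply its own proof of this lemma; it simply states it and defers to \cite{TYZ20}, so there is nothing in the text to compare against directly. Your unwinding of the priority order together with the Hasse--Taylor fact that the functionals $f\mapsto D^\beta f(q)$ for $|\beta|\le\lambda$ span $(\F_\lambda[x_1,\ldots,x_k])^*$ is exactly the intended mechanism. One minor redundancy: your argument that $(q,r')\prec(p,r)$ for all $r'\le\lambda$ already covers every $r\ge 0$, so the separate case $r>\lambda$ is not needed, though it does no harm.
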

\begin{lemma}[$k$-Plane Monotonicity]\label{lemma:kMonotonicity}
Let $\lambda \in \N$, and $\alpha^{(1)}, \alpha^{(2)}\in \Z^J$ be two handicaps. Suppose $\exists p\in\pi \cap J$ so that $\alpha_p^{(1)} - \alpha_{p^\prime}^{(1)} \leq \alpha_p^{(2)} - \alpha_{p^\prime}^{(2)}$ for all $p^\prime\in \pi \cap J $. Then 
\[\tilde S_k(p,\pi,\alpha^{(1)}, \lambda) \leq \tilde S_k(p, \pi, \alpha^{(2)}, \lambda).\] 
\end{lemma}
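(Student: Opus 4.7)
The plan is to reduce to a set-theoretic inclusion of priority prefixes and then apply the submodularity of the rank function, comparing $|B_r(p, \pi, \alpha^{(i)}, \lambda)|$ for each $r$ separately before summing. First, by Proposition \ref{proposition:3}, shifting $\alpha^{(2)}$ by the constant $c = \alpha_p^{(1)} - \alpha_p^{(2)}$ leaves $\tilde S_k(p, \pi, \alpha^{(2)}, \lambda)$ unchanged, so I may assume $\alpha_p^{(1)} = \alpha_p^{(2)}$. The hypothesis then reduces to $\alpha_{p^\prime}^{(1)} \geq \alpha_{p^\prime}^{(2)}$ for every $p^\prime \in \pi \cap J$. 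Write $\prec^{(i)}$ for $\prec_{\alpha^{(i)}}$ and set
\[P^{(i)}(p, r) := \{(p^\prime, r^\prime) \in (\pi \cap J) \times \Z_{\geq 0} : (p^\prime, r^\prime) \prec^{(i)} (p, r)\}.\]

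The crux is the inclusion $P^{(2)}(p, r) \subseteq P^{(1)}(p, r)$ for every $r \geq 0$. The key $r^\prime - \alpha_{p^\prime}^{(i)}$ of $(p^\prime, r^\prime)$ is weakly smaller under $\alpha^{(1)}$ than under $\alpha^{(2)}$ (since $\alpha^{(1)}_{p^\prime} \geq \alpha^{(2)}_{p^\prime}$), while the key of $(p, r)$ agrees under both handicaps. Suppose $(p^\prime, r^\prime) \prec^{(2)} (p, r)$: if this is witnessed by strict inequality of keys under $\alpha^{(2)}$, the corresponding strict inequality persists under $\alpha^{(1)}$; if it is instead witnessed by a tie of keys with $p^\prime < p$, then under $\alpha^{(1)}$ the key of $(p^\prime, r^\prime)$ is either strictly smaller (giving $\prec^{(1)}$) or still equal, in which case the same tie-break $p^\prime < p$ again yields $\prec^{(1)}$. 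The case $p^\prime = p$ is immediate because its key does not depend on the handicap.

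Finally, by the defining property of the iterative construction in Section \ref{section:6.2},
\[|B_r(p, \pi, \alpha^{(i)}, \lambda)| = \dim\bigl(U^{(i)}_r + \B_r(p, \pi, \lambda)\bigr) - \dim U^{(i)}_r,\]
where $U^{(i)}_r := \lspan \bigcup_{(p^\prime, r^\prime) \in P^{(i)}(p, r)} \B_{r^\prime}(p^\prime, \pi, \lambda)$. The identity $\dim(V + W) - \dim V = \dim W - \dim(V \cap W)$ shows that the marginal dimension gained by adjoining $W$ is non-increasing as $V$ grows. Applied with $V = U^{(2)}_r \subseteq U^{(1)}_r$ and $W = \B_r(p, \pi, \lambda)$, this gives $|B_r(p, \pi, \alpha^{(2)}, \lambda)| \geq |B_r(p, \pi, \alpha^{(1)}, \lambda)|$ for each $r$. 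Summing over $r \geq 0$ (a finite sum since $\B_r(p, \pi, \lambda)$ is trivial for $r > \lambda$) yields the desired inequality. The main obstacle I expect is the careful tie-break bookkeeping in the middle step; the submodularity argument that follows is standard.
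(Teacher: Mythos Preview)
Your argument is correct. The paper does not provide its own proof of this lemma; it simply cites \cite{TYZ20} (see the sentence preceding Lemma~\ref{lemma:kUniformBoundedness}). Your approach---normalising via translation invariance, establishing the inclusion $P^{(2)}(p,r)\subseteq P^{(1)}(p,r)$ of priority prefixes, and then invoking submodularity of the rank function to compare the increments $|B_r|$---is the natural route and matches the spirit of the handicap machinery from \cite{TYZ20}. There is nothing substantive to compare against here beyond noting that your proof stands on its own.
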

\begin{lemma}[$k$-Plane Continuity]\label{lemma:kContinuity}
Let $p\in \pi \cap J$, let $\alpha^{(1)}, \alpha^{(2)}\in \Z^J$ be handicaps and $\lambda\in \N$. Then
\[\abs{\tilde S(p,\pi,\alpha^{(1)}, \lambda) - \tilde S(p,\pi, \alpha^{(2)}, \lambda)}\leq {\lambda + k-1 \choose k-1}\sum_{p^\prime \in J}\abs{(\alpha_p^{(1)} - \alpha^{(1)}_{p^\prime}) - (\alpha^{(2)}_p-\alpha^{(2)}_{p^\prime})}.\]
\end{lemma}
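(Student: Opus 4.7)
The plan is to apply translation invariance to normalise the handicaps, reduce to unit perturbations via the triangle inequality, and then bound the effect of a single unit change by a basis-swap analysis of the greedy construction.

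By Proposition~\ref{proposition:3}, $\tilde S(p,\pi,\alpha,\lambda)$ is invariant under adding an integer constant to $\alpha$, and the same is obviously true for the differences $(\alpha_p - \alpha_{p'})$ appearing on the right-hand side; so we may shift $\alpha^{(2)}$ by a constant to arrange $\alpha^{(1)}_p = \alpha^{(2)}_p$, after which the right-hand side reads ${\lambda + k - 1 \choose k - 1} \sum_{p' \in J} |\alpha^{(1)}_{p'} - \alpha^{(2)}_{p'}|$. Next, connect $\alpha^{(1)}$ to $\alpha^{(2)}$ by a chain of integer handicaps $\alpha^{(1)} = \beta^{(0)}, \ldots, \beta^{(N)} = \alpha^{(2)}$ of length $N = \sum_{p' \in J} |\alpha^{(1)}_{p'} - \alpha^{(2)}_{p'}|$, with consecutive handicaps agreeing everywhere except at a single point of $J$, where they differ by $1$. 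The triangle inequality reduces matters to the unit-step estimate
\[\big|\tilde S(p, \pi, \beta, \lambda) - \tilde S(p, \pi, \beta', \lambda)\big| \leq {\lambda + k - 1 \choose k - 1}\]
for any two handicaps $\beta, \beta'$ differing by $1$ at a single point $q \in J$.

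For this unit-step estimate, Remark~\ref{remark:03} shows that $\tilde S(p, \pi, \beta, \lambda)$ depends only on $\beta|_{\pi \cap J}$, so if $q \notin \pi \cap J$ the two sides are equal. Otherwise $q \in \pi \cap J$, and WLOG $\beta'_q = \beta_q + 1$. The ordering $\prec_{\beta'}$ is obtained from $\prec_\beta$ by decreasing the priority level of each $(q, r)$-block by $1$, and can be realised as a finite composition of adjacent transpositions in $\prec$. For each such transposition of neighbouring blocks $A = \B_{r_1}(q_1, \pi, \lambda)$ and $B = \B_{r_2}(q_2, \pi, \lambda)$ sitting above the span $V$ of all previously processed blocks, the modular identity $\dim(V + A) + \dim(V + B) = \dim(V + A + B) + \dim((V+A) \cap (V+B))$ shows that the contributions $|B_{r_1}(q_1, \pi, \cdot, \lambda)|$ and $|B_{r_2}(q_2, \pi, \cdot, \lambda)|$ change by equal and opposite signed amounts, each bounded by $\dim(((V+A)\cap(V+B))/V) \leq \min(\dim A, \dim B)$.

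The main obstacle is to show that the aggregate change in $\tilde S(p, \pi, \cdot, \lambda)$ across all transpositions affecting the $p$-blocks has total magnitude at most ${\lambda + k - 1 \choose k - 1}$. Because only $q$-blocks shift, each $(p, r)$-block is involved in exactly one relevant transposition — with the unique $(q, r')$-block that crosses it during the level shift — yet the naive sum over $r$ of the per-transposition bounds gives $\sum_{r=0}^{\lambda} \dim \B_r(p, \pi, \lambda) = {\lambda + k \choose k}$, which is too large. The sharper bound comes from recognising that the subquotients $(V+A)\cap(V+B)/V$ contributed by these transpositions embed compatibly into a single filtration of $\F_\lambda[x_1, \ldots, x_k]^*$ whose total associated graded dimension is at most ${\lambda + k - 1 \choose k - 1} = \dim \B_\lambda(p, \pi, \lambda)$. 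This refined bookkeeping is the technical core of the argument and is carried out in detail in \cite{TYZ20}.
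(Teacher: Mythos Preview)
The paper does not give its own proof of this lemma: it is stated and immediately attributed to \cite{TYZ20} (``The good properties of handicaps are as follows, and are proved in \cite{TYZ20}''), with only the post-hoc remark that translation invariance lets one set $\alpha^{(i)}_p=0$. So there is nothing in the paper to compare against beyond that citation.

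Your outline is therefore strictly more detailed than the paper's treatment, and the reductions you make are sound: the normalisation via Proposition~\ref{proposition:3}, the chain-of-unit-steps reduction, the observation via Remark~\ref{remark:03} that only $q\in\pi\cap J$ matters, and the fact that each $(p,r)$-block is crossed by exactly one $(q,r')$-block under the level shift are all correct. You are also right that the naive per-transposition bound sums to ${\lambda+k\choose k}$ rather than ${\lambda+k-1\choose k-1}$, so something sharper is needed.

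Where your proposal becomes a genuine deferral rather than a proof is the final paragraph. The sentence ``the subquotients \ldots\ embed compatibly into a single filtration of $\F_\lambda[x_1,\ldots,x_k]^*$ whose total associated graded dimension is at most ${\lambda+k-1\choose k-1}$'' is not an argument but a hope: you have not said what the filtration is, why the subquotients embed in it, or why its graded dimension is bounded by the dimension of a single top-order block. Since you then cite \cite{TYZ20} for this step anyway, your proposal and the paper's treatment ultimately coincide --- both defer the substantive inequality to \cite{TYZ20}. If you want this to stand as an independent proof, that paragraph needs to be replaced by an explicit argument (for instance, tracking the gain at $q$ directly and using that the loss at $p$ is bounded by the gain at $q$, which in turn is controlled by a single order-layer).
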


By translation invariance, Proposition \ref{proposition:3}, we may choose $\alpha_p^{(i)}=0$ for $i=1,2$. The resulting inequality reads as
\[\abs{\tilde S(p,\pi,\alpha^{(1)}, \lambda) - \tilde S(p,\pi, \alpha^{(2)}, \lambda)}\leq {\lambda + k-1 \choose k-1}\sum_{p^\prime \in J}\abs{\alpha^{(1)}_{p^\prime}-\alpha^{(2)}_{p^\prime}}.\]

This is more easily recognisable as a Lipschitz continuity result. In fact, if desired, we could redefine handicaps and identify $\alpha$ by its equivalence class, $\alpha + \Z^{\abs{J}}$, under equivalence by translation invariance.

\section{The Discrete Bourgain--Guth Theorem}\label{section:BG}

We now prove our discrete Bourgain--Guth theorem by making a material modification the Tidor--Yu--Zhao perturbation argument, \cite{TYZ20}.

\subsection{There Exists a Good Handicap}\label{section:GoodHandicap}We prove that there exists a handicap with  properties which are good for our purposes, and begin by introducing the notion of ``connectedness'' of the multijoint configuration, which we define as follows. We say $\pi \in \Pi_1\cups\Pi_d$ \textbf{contributes} to a multijoint $p$ if there is some tuple $(\pi_j)_j\in \Pi_1\timess\Pi_d$ so that $\pi = \pi_j$ for some $j$ and $\delta(p,(\pi_j)_j)=1$. We say that $p,q\in J$ are \textbf{adjacent} if there is some $\pi \in \Pi_1\cups \Pi_d$ that contributes to $p$ and contributes to $q$. We say a set $E\subseteq J$ is \textbf{connected} if given any $p,q\in E$, there is a sequence of points $p=p^{(1)}, \ldots, p^{(N)} = q\in E$ so that $p^{(i)}$ and $p^{(i+1)}$ are adjacent for all $1\leq i < N$. This defines an equivalence relation on any $E\subseteq J$. In order to prove Theorem \ref{theorem:2b}, it suffices assume that $\Supp S$ is connected.

Our main lemma can now be stated as follows.

\begin{lemma}[$k$-Plane Handicap]\label{lemma:kHandicap}
Let $\lambda \in \N$, and $\Pi_1, \ldots, \Pi_d$ be finite sets of $k_1$-$, \ldots, k_d$ -planes in $\F^n$ where $k_1+\ldots + k_d=n$, with associated multijoints $J$. Let $S : J\rightarrow \R_{\geq0}$ be finitely supported and suppose that any two multijoints in $\Supp S$ are connected by multijoints in $\Supp S$. Then there is a handicap $\alpha : J \rightarrow \Z$ so that for all $p\in \Supp S$, 
\begin{equation}\label{eq:24}\min_{\bm{\pi} : \delta(p,\bm{\pi})=1}\frac{1}{S(p)^d}\(\prod_{j=1}^d \frac{\tilde S_{k_j}(p,\pi_j,\alpha, \lambda)}{{\lambda+k_j\choose k_j}}\) \end{equation}
lies in a common interval with length $\leq h^\prime/\lambda$ for some $h^\prime=h^\prime(S,J,n)$, which does not depend on $\lambda$. Furthermore, we may choose $\alpha$ so that $\tilde S_{k_j}(p,\cdot,\alpha,\lambda)=0$ for all $p\not\in \Supp S$.
\end{lemma}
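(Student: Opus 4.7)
The plan is to follow the optimisation--perturbation strategy of \cite{TYZ20}, modified so as to simultaneously balance the quantities
\[M(p,\alpha):=\frac{1}{S(p)^d}\min_{\bm{\pi}:\delta(p,\bm{\pi})=1}\prod_{j=1}^d\frac{\tilde S_{k_j}(p,\pi_j,\alpha,\lambda)}{\binom{\lambda+k_j}{k_j}}\]
across all $p\in\Supp S$, rather than merely aiming at a single distinguished target as in \cite{TYZ20}.

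First I would reduce to a finite search. Translation invariance (Proposition \ref{proposition:3}) lets me quotient by constant shifts of $\alpha$, so I normalise $\alpha_{p_0}=0$ at some basepoint $p_0\in\Supp S$. By uniform boundedness (Lemma \ref{lemma:kUniformBoundedness}), setting $\alpha_p$ to an enormous value for every $p\notin\Supp S$ forces $\tilde S_{k_j}(p,\cdot,\alpha,\lambda)=0$ at those points, which already supplies the final assertion of the lemma. A second application of uniform boundedness along contributing planes in $\Pi_1\cups\Pi_d$, together with the connectedness hypothesis, confines the useful values of $\alpha$ on $\Supp S$ to a finite box. Within this finite admissible set I select $\alpha^\ast$ so as to maximise $F(\alpha):=\min_{p\in\Supp S}M(p,\alpha)$, breaking ties by minimising $G(\alpha):=\max_{p\in\Supp S}M(p,\alpha)$. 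Setting $V:=F(\alpha^\ast)$ and $U:=G(\alpha^\ast)$, the lemma reduces to the estimate $U-V\leq h'/\lambda$ for some $h'=h'(S,J,n)$.

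The heart of the argument is a perturbation/contradiction. Assume $U-V>h'/\lambda$. By the connectedness hypothesis and pigeonhole along a chain of adjacencies in $\Supp S$ from a max-point to a min-point, extract adjacent $p^\ast,p_\ast\in\Supp S$ sharing a contributing plane and satisfying $M(p^\ast,\alpha^\ast)-M(p_\ast,\alpha^\ast)>h'/(\lambda\,\abs{\Supp S})$. Consider the decremented handicap $\alpha':=\alpha^\ast-e_{p^\ast}$. Monotonicity (Lemma \ref{lemma:kMonotonicity}) yields $\tilde S_{k_j}(p^\ast,\pi',\alpha')\leq\tilde S_{k_j}(p^\ast,\pi',\alpha^\ast)$ on every $\pi'\ni p^\ast$ and $\tilde S_{k_j}(q,\pi',\alpha')\geq\tilde S_{k_j}(q,\pi',\alpha^\ast)$ for every other $q\in\pi'$, while planes missing $p^\ast$ are unaffected; hence $M(p^\ast,\alpha')\leq M(p^\ast,\alpha^\ast)$ and $M(q,\alpha')\geq M(q,\alpha^\ast)$ for every $q\neq p^\ast$. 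Continuity (Lemma \ref{lemma:kContinuity}) limits every such change to at most $c_n/\lambda$ for a dimensional constant $c_n$. With $h'$ chosen large enough relative to $c_n$ and $\abs{J}$, the prescribed gap prevents any $M$-value from crossing below $V$, and the resulting $\alpha'$ strictly dominates $\alpha^\ast$ in the lex order $(F,-G)$: either $F$ is strictly lifted (contradicting maximality) or $F$ is preserved and $G$ is strictly lowered (contradicting the tie-break). Iterating if necessary propagates the balance along the adjacency chain and yields $U-V\leq h'/\lambda$.

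The main obstacle I anticipate is the bookkeeping behind the tie-break: a single decrement at $p^\ast$ need not strictly improve any particular $M$-value (the arg-min defining $M(q,\alpha^\ast)$ may fail to involve a plane through $p^\ast$, and the global maximum may migrate to a new point), so the argument must track the arg-min structure carefully and exploit the $c_n/\lambda$ continuity estimate to absorb the collateral effects. These technicalities fix the precise form of $h'$, which will depend polynomially on $\abs{J}$ and the combinatorics of the configuration; the $\lambda^{-1}$ scaling, however, is already forced by the ratio $\binom{\lambda+k-1}{k-1}/\binom{\lambda+k}{k}\sim k/(\lambda+k)$ underlying Lemma \ref{lemma:kContinuity}.
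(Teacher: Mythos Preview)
Your perturbation step does not close. After decrementing at the single point $p^\ast$, monotonicity gives $M(q,\alpha')\geq M(q,\alpha^\ast)$ for every $q\neq p^\ast$, so if the global maximum $U$ is attained at some $q_0\neq p^\ast$ (which is the generic case---$p^\ast$ was selected by pigeonhole along a chain, not as the maximiser), then $G(\alpha')\geq M(q_0,\alpha')\geq M(q_0,\alpha^\ast)=U=G(\alpha^\ast)$. Thus $G$ does not decrease; in fact $M(q_0,\alpha')$ may strictly exceed $U$. Likewise $F$ need not strictly increase: the minimum may be attained at a point whose minimising tuple of planes avoids $p^\ast$ entirely, so $F(\alpha')=F(\alpha^\ast)$. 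Hence neither branch of your $(F,-G)$ contradiction fires, and iterating the same decrement at $p^\ast$ does not help (you only drive $M(p^\ast)$ down while possibly pushing other values above $U$). A two-level tie-break is too coarse to detect progress.

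The paper's argument repairs exactly these two points. First, it optimises over the \emph{full} lexicographic order of the sorted-decreasing vector $(w_{\sigma(1)},\ldots,w_{\sigma(|J|)})$, and it \emph{minimises} rather than max--min's. Second, and crucially, it perturbs by decrementing the handicap at \emph{all} points on the large side of the gap simultaneously (the block $\{i\leq t\}\cup(J\setminus\Supp S)$), so that every large $w_i$ weakly decreases and every small $w_i$ weakly increases; the continuity Proposition then shows the large block stays large and the small block stays small after re-sorting. This yields either a strict lex decrease (contradicting minimality) or exact equality $w(\alpha')=w(\alpha)$, and in the equality case one repeats the block decrement arbitrarily many times and invokes connectedness together with uniform boundedness to force some large $w_i$ to $0$, contradicting $w_t-w_{t+1}>h/\lambda$. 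One small sign issue as well: to force $\tilde S_{k_j}(p,\cdot,\alpha,\lambda)=0$ for $p\notin\Supp S$ via Lemma~\ref{lemma:kUniformBoundedness} you must take $\alpha_p$ very \emph{negative}, not large.
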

\begin{remark}
This lemma is different from its precursor, \cite[Lemma 5.10]{TYZ20}. Specifically, the number of factors in the geometric mean \eqref{eq:24} is now precisely $d$ rather than 
\[\sum_{\pi_1, \ldots, \pi_d} \delta(p,\pi_1, \ldots, \pi_d)f_1(\pi_1) \cdots f_d(\pi_d)\]
 terms, \textit{i.e.} the multijoint multiplicity, at each $p\in J$ in \cite{TYZ20}. Moreover, the weights for our geometric mean \eqref{eq:24} are uniform and not problem-dependent as they were in \cite{TYZ20}. \demo
\end{remark}

We now prepare for our proof of Lemma \ref{lemma:kHandicap}. Let $\alpha$ be a handicap. For any $p\in J$ and $(\pi_j)_j\in\Pi_1\timess \Pi_d$ so that $p\in \cap_j \pi_j$, let
\[W((\pi_j)_j, \alpha) := \frac{1}{S(p)^d}\prod_{j=1}^d \tilde S_{k_j}(p,\pi_j,\alpha,\lambda).\]
Note that $W$ depends additionally on $p$ and $\lambda$. However, $\lambda$ is fixed for this lemma, and the dependence on $p$ is implicit because $p\in \cap_j \pi_j$. Therefore, we suppress $p$ and $\lambda$. Now, for each $p\in J$, we define
\[w_p(\alpha) := \min_{(\pi_j)_j : \delta(p, (\pi_j)_j)=1} W((\pi_j)_j, \alpha).\]

Let $w : \{\alpha:J\rightarrow \Z\}\rightarrow \R_{\geq0}^{\abs{J}}$ be the map such that $w(\alpha) = (w_p(\alpha))_p$. Define $A=A(\lambda)\subset \{\alpha : J \rightarrow \Z\}$ to be the set of $\alpha$ such that $\tilde S(p,\cdot,\alpha,\lambda)=0$ for all $p\not\in \Supp S$. By uniform boundedness, Lemma \ref{lemma:kUniformBoundedness}, the image $w(A)$ is finite and non-empty.

Let us label each $p\in J$ so that $J = \{p_1, \ldots, p_{\abs{J}}\}$. For any $\alpha$, there exists a permutation $\sigma = \sigma_\alpha \in S_{\abs{J}}$ so that $w_{p_{\sigma(1)}}(\alpha) \geq \cdots \geq w_{p_{\sigma(\abs{J})}}(\alpha)$. Since the set $w(A)$ is finite, of all $w(\alpha)\in w(A)$, we can choose one so that $(w_{p_{\sigma(i)}}(\alpha))_{1\leq i \leq \abs{J}} \in \R_{\geq0}^{\abs{J}}$ is minimal with respect to lexicographical order on $\R^{\abs{J}}$.

Let 
\begin{equation}\label{10eq:9}
w(\alpha) = (w_{p_{\sigma(i)}}(\alpha))_{1\leq i \leq \abs{J}}\end{equation}
 be such a minimum and let $\alpha$ be a minimiser. By relabelling the indices of each $p\in J$, we may assume that $\sigma$ is the identity permutation so that
\[w_{p_1}(\alpha) \geq \cdots \geq w_{p_{\abs{J}}}(\alpha)\geq0.\]
For ease of notation, let $w_i :=w_{p_i}(\alpha)$ for each $1\leq i \leq \abs{J}$.

\begin{proposition}[Continuity of Perturbations]\label{proposition:kh}
 Let $1\leq t \leq \abs{J}$. Let
\[v = \sum_{\substack{1\leq i \leq t :\\ p_i \in \Supp S}}e_i + \sum_{\substack{i :\\ p_i\not\in \Supp S}}e_i\]
and $\alpha^\prime = \alpha - v$. There is a a constant $h$ which depends on $S, n$ and $\abs{J}$, but not  on $\lambda$, so that
\[
\abs{w_i(\alpha)-w_i(\alpha^\prime)} \leq  \frac{h}{2\lambda}\]
for all $1\leq i \leq N$, where $N = \abs{\Supp S}$.
\end{proposition}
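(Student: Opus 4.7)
The plan is to apply the $k$-Plane Continuity Lemma (Lemma \ref{lemma:kContinuity}) separately to each factor $\tilde S_{k_j}(p,\pi_j,\cdot,\lambda)$ appearing in the product defining $w_p$, combine the resulting factor-wise perturbation estimates via the product rule, and conclude by passing to the minimum over admissible tuples.

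First, since $\alpha - \alpha' = v$ has entries in $\{0,1\}$, for any $p, p' \in J$ we have
\[(\alpha_p - \alpha_{p'}) - (\alpha'_p - \alpha'_{p'}) = v_p - v_{p'} \in \{-1,0,1\},\]
which gives the crude but decisive estimate $\sum_{p' \in J}|(\alpha_p - \alpha_{p'}) - (\alpha'_p - \alpha'_{p'})| \leq |J|$. Fed into Lemma \ref{lemma:kContinuity}, this yields
\[|\tilde S_{k_j}(p,\pi_j,\alpha,\lambda) - \tilde S_{k_j}(p,\pi_j,\alpha',\lambda)| \leq \binom{\lambda+k_j-1}{k_j-1}|J|\]
for every $p \in J$, $1 \leq j \leq d$, and $\pi_j \ni p$. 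Combined with the uniform upper bound $\tilde S_{k_j}(p,\pi_j,\cdot,\lambda) \leq \binom{\lambda+k_j}{k_j}$ from \eqref{eq:UpperBound} and the identity $\binom{\lambda+k_j-1}{k_j-1}/\binom{\lambda+k_j}{k_j} = k_j/(\lambda+k_j)$, this shows that each normalised quantity $\tilde S_{k_j}(p,\pi_j,\cdot,\lambda)/\binom{\lambda+k_j}{k_j}$ takes values in $[0,1]$ and is perturbed by $O_n(|J|/\lambda)$ when passing from $\alpha$ to $\alpha'$.

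Next, the elementary product bound $\left|\prod_j a_j - \prod_j b_j\right| \leq \sum_j |a_j - b_j|\prod_{i\neq j}\max\{a_i,b_i\}$ applied to these $d$ normalised factors upgrades the factor-wise estimate to an $O_n(|J|/\lambda)$ bound for the whole product $\prod_j \tilde S_{k_j}(p,\pi_j,\cdot,\lambda)/\binom{\lambda+k_j}{k_j}$ occurring in the geometric mean of Lemma \ref{lemma:kHandicap}. Dividing by $S(p)^d$ and using the standard inequality $|\min f - \min g| \leq \max|f-g|$ on a finite set to pass to the minimum over tuples $(\pi_j)_j$ with $\delta(p,(\pi_j)_j)=1$, we obtain $|w_p(\alpha) - w_p(\alpha')| \leq C_n|J|/(S(p)^d \lambda)$. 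For $p_i \in \Supp S$ (in particular, for $1 \leq i \leq N$), the factor $S(p_i)^{-d}$ is bounded above by $s_{\min}^{-d}$ where $s_{\min} := \min_{q \in \Supp S}S(q) > 0$, and absorbing this quantity into $h$ gives a constant depending only on $n$, $|J|$ and $S$, as required.

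The argument presents no real obstacle: it is a routine combination of the $k$-Plane Continuity Lemma with the product rule. The decisive input is the cheap bound $\|v\|_\infty \leq 1$, which is exactly what is needed to produce the $1/\lambda$ decay once the factors are normalised by $\binom{\lambda+k_j}{k_j}$, matching the scaling built into the geometric mean appearing in Lemma \ref{lemma:kHandicap}.
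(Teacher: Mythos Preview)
Your proof is correct and follows essentially the same approach as the paper: apply Lemma~\ref{lemma:kContinuity} with $\norm{v}_{L^1(J)}\le |J|$ to control each factor, combine via a product expansion, absorb $S(p)^{-d}$, and pass to the minimum. Your presentation is somewhat cleaner---normalising first so that all factors lie in $[0,1]$ and invoking the telescoping bound $|\prod a_j-\prod b_j|\le \sum_j|a_j-b_j|\prod_{i\neq j}\max\{a_i,b_i\}$ together with $|\min f-\min g|\le \max|f-g|$---whereas the paper writes out the full expansion of $\prod_j(b_j-(b_j-a_j))-\prod_j b_j$ and handles the minimum by a short case analysis, but the substance is identical.
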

\begin{proof}
We construct $h$ directly. Fix $1\leq i \leq N$, fix a $d$-tuple of planes $\bm{\pi}$  which realises $w_i(\alpha)$, and fix $(\pi^\prime_j)_j$ which realises $w_i(\alpha^\prime)$. Consider
\[\abs{w_i(\alpha) - w_i(\alpha^\prime)} = \abs{W((\pi_j)_j,\alpha) - W((\pi^\prime_j)_j, \alpha^\prime)}.\]
If $(\pi_j)_j = (\pi^\prime_j)_j$ then $\abs{w_i(\alpha) - w_i(\alpha^\prime)} = \abs{W((\pi_j)_j, \alpha) - W((\pi_j)_j,\alpha^\prime)}$. Otherwise, $(\pi_j)_j \neq (\pi^\prime_j)_j$, in which case
\[\abs{w_i(\alpha) - w_i(\alpha)} \leq \max_{{(\tilde\pi_j)_j} } \abs{W({ (\tilde\pi_j)_j}, \alpha) - W({(\tilde\pi_j)_j}, \alpha^\prime)},\]
where the maximum is over all ${(\tilde\pi_j)_j} \in (\argmin W(\cdot, \alpha) \cup \argmin W(\cdot, \alpha^\prime))$. Hence, we may assume
\[\abs{w_i(\alpha) - w_i(\alpha^\prime)} \leq \abs{W({(\tilde\pi_j)_j},\alpha) - W({ (\tilde\pi_j)_j}, \alpha^\prime)},\]
for some ${ (\tilde\pi_j)_j}$, which minimises either $W(\cdot, \alpha)$ or $W(\cdot, \alpha^\prime)$. However, for any $(\pi_j)_j\in \Pi_1\timess\Pi_d$ which forms a multijoint at $p=p_i$, 
\begin{equation}\label{eq:b7}
\begin{aligned}
&S(p)^d\(\prod_{j=1}^d {\lambda+ k_j\choose k_j}\)\abs{W((\pi_j)_j,\alpha)-W((\pi_j)_j, \alpha^\prime)} \\
&= \abs{\prod_{j=1}^d \(\tilde S(p,\pi_j,\alpha^\prime, \lambda) - (\tilde S(p,\pi_j,\alpha^\prime, \lambda) - \tilde S(p,\pi_j,\alpha, \lambda))\) - \prod_{j=1}^d \tilde S(p,\pi_j,\alpha^\prime, \lambda)}.\end{aligned}
\end{equation}

Taking \eqref{eq:b7}, we expand the first product so that we can cancel both occurences of $\prod_j \tilde S(p,\pi_j, \alpha^\prime, \lambda)$. We are then left with terms of the following form:
\begin{equation}\label{eq:c8}
\(\prod_{j\in A} \tilde S(p,\pi_j,\alpha^\prime,\lambda)\) \(\prod_{j^\prime\in B}  (\tilde S(p,\pi_j,\alpha^\prime, \lambda) - \tilde S(p,\pi_j,\alpha, \lambda))\),
\end{equation}
where $A\sqcup B = \{1, \ldots, d\}$ and $B\neq \emptyset$. To establish an upper bound on \eqref{eq:b7}, by the triangle inequality, it suffices to bound each such term of the form \eqref{eq:c8} separately. By continuity (Lemma \ref{lemma:kContinuity}), for any $\pi_j \in \Pi_j$ and $p\in \pi_j\cap J$,
\[\abs{\tilde S(p, \pi_j,\alpha,  \lambda) -\tilde S(p, \pi_j,\alpha^\prime, \lambda)} \leq {\lambda + k_j-1\choose k_j-1}\abs{J},\]
since $\norm{\alpha - \alpha^\prime}_{L^1(J)} =\norm{v}_{L^1(J)}\leq \abs{J}$. Combining this with the fact that each $\tilde S(p,\pi, \alpha^\prime, \lambda) = {\lambda + k_j\choose k_j}$ by construction, each term \eqref{eq:c8} is bounded by 
\[\prod_{j\in A} {\lambda + k_j\choose k_j}\prod_{j\in B}{\lambda + k_j-1\choose k_j-1}\abs{J} \sim_{n, J} \lambda^{\sum_{j\in A}k_j+\sum_{j^\prime\in B} (k_{j^\prime}-1)},\]
for sets $A,B$ so that $A\sqcup B = \{1, \ldots, d\}$ and $B \neq\emptyset$. Hence, \eqref{eq:b7} is dominated by 
\[S(p)^d\(\prod_{j=1}^d {\lambda+ k_j\choose k_j}\)\abs{W((\pi_j)_j,\alpha)-W((\pi_j)_j, \alpha^\prime)}  \leq h_{n-1}\lambda^{n-1} +  \ldots + h_1 \lambda + h_0\]
for $h_{n-1},\ldots, h_0$ sufficiently large, depending on $n$ and $\abs{J}$. This in turn is bounded above by $(h/2)\lambda^{n-1}$ for sufficiently large $h$, depending only on $h_{n-1}, \ldots, h_1$ and not depending on $\lambda$. Dividing by 
\[\(\prod_{j=1}^d {\lambda+ k_j\choose k_j}\)\sim_{n, J} \lambda^n\]
and updating  $h$ to additionally depend on $S(p)^{-d}$, we deduce
\[\abs{w_i(\alpha)-w_i(\alpha^\prime)} \leq \abs{W((\pi_j)_j,\alpha)-W((\pi_j)_j,\alpha^\prime)} \leq  \frac{h}{2\lambda}\]
for some $h$ which can be expressed as a function in $\abs{J}, n$ and the quantities $\{S(p)^{-d} : p\in J\}$, but does not depend on $\lambda$. This defines $h$.
\end{proof}

The remainder of this subsection is dedicated to proving Lemma \ref{lemma:kHandicap}. Before we embark on the proof itself, we give a brief outline of the argument. 

Since $w_1(\alpha) \geq \ldots \geq w_{\abs{J}}(\alpha)\geq 0$, it suffices to show that all the differences ${w_i(\alpha)-w_{i+1}(\alpha)}$ are small. We will prove this by contradiction. To begin, we assume there is some index $1\leq t <N$ so that ${w_{t}(\alpha)-w_{t+1}(\alpha)} > h/\lambda$, with $h$ as given by Proposition \ref{proposition:kh}. We construct a perturbation of the handicap $\alpha$. Let $\alpha^\prime$ be the perturbed handicap. The following three claims are established for the perturbation:\vspace{-0.3em}
\begin{enumerate}
\item The large entries of the tuple $w(\alpha)$ remain large, and the small entries remain small.
\item The perturbed handicap $\alpha^\prime$ is such that  $w(\alpha^\prime)\in w(A)$, so $w(\alpha) \leq w(\alpha^\prime)$.
\item If $w(\alpha)\neq w(\alpha^\prime)$ then $w(\alpha^\prime) < w(\alpha)$. Hence, $w(\alpha^\prime)=w(\alpha)$.
\end{enumerate}\vspace{-0.3em}
Thereafter, we realise that the perturbation can be applied to $\alpha^\prime$, the already perturbed handicap. Moreover, by the connectedness of $\Supp S$, there is a pair $p,q\in \Supp S$ so that $w_p(\alpha)$ is large, $w_q(\alpha)$ is small and there is a plane which contributes to both $p$ and $q$. If we perturb sufficiently many times, what results is a perturbed handicap $\alpha^\prime$ so that $w(\alpha)=w(\alpha^{\prime})$, and $\alpha_p^\prime < \alpha_q^\prime - \lambda$. By uniform boundedness (Lemma \ref{lemma:kUniformBoundedness}), $\tilde S_{k_j}(p,\pi,\alpha^\prime,\lambda)=0$, and hence $w_q(\alpha) =0$. So $0 = w_p(\alpha)\geq w_q(\alpha) \geq 0$, and hence $w_q(\alpha)=0$. However, we assumed for a contradiction that $w_p(\alpha)$ was large and that $w_q(\alpha)$ was small so that $w_p(\alpha) > w_q(\alpha)$, which is a contradiction.
\begin{proof}[Proof of Lemma \ref{lemma:kHandicap}]
Let $\Supp S = \{p_1, \ldots, p_N\}$ for some $N\leq \abs{J}$ be connected, and let $h^\prime = \abs{J}h$, where $h$ is given by Proposition \ref{proposition:kh}. Since the tuple $(w_i)_i$ has at most $\abs{{J}}$ non-zero entries, it suffices to show that
\begin{equation}\label{khyp2}
w_i - w_{i+1} \leq \frac h \lambda
\end{equation}
for all $1\leq i < N$. Indeed, summing this inequality over all $1\leq i \leq N$, we deduce $\max_i w_i - \min_i w_i \leq \abs{J}h/\lambda$. That is, the interval containing all $(w_i)_{1\leq i\leq N}$ has width $\sim_{k_j, J, S}1/\lambda$, where the implied constant does not depend on $\lambda$.

We now prove that \eqref{khyp2} holds for all $1\leq i < N$. {Suppose} for a contradiction that there is some index $1\leq i< N$ so that \eqref{khyp2} does not hold.  Let $t$ be the least such index, and for this choice of $t$, define 
\[v = \sum_{\substack{1\leq i \leq t :\\ p_i \in \Supp S}}e_i + \sum_{\substack{i :\\ p_i\not\in \Supp S}}e_i\]  
and let $\alpha^\prime = \alpha - v$. 

We say that $w_i$ is \textbf{large} if $i \leq t$, and $w_i$ is \textbf{small} otherwise.

Let $\pi\in\Pi_j$ for some $1\leq j \leq d$. Recall that $\sum_{p\in \pi} \tilde S_{k_j}(p,\pi,\alpha,\lambda) = {\lambda + k_j \choose k_j}$  for any $\alpha$. It follows that, if $\tilde S_{k_j}(p,\pi,\alpha^\prime,\lambda) > \tilde S_{k_j}(p,\pi,\alpha,\lambda)$ for some $p\in \pi\cap J$, then there exists some $p^\prime \in \pi\cap J$ so that $p^\prime \neq p$ and $\tilde S_{k_j}(p^\prime, \pi, \alpha^\prime, \lambda) < \tilde S_{k_j}(p^\prime, \pi, \alpha, \lambda)$. For those $i$ such that  $p_i\not\in \Supp S$, we have that $w_i(\alpha) = 0 = w_i(\alpha^\prime)$. Moreover, by monotonicity (Lemma \ref{lemma:kMonotonicity}), for $i$ such that $p_i\in \Supp S$, we have that if $i\leq t$ then $w_i(\alpha^\prime) \leq w_i(\alpha)$  (where the handicap is decreased) and  if $i > t$ then $w_i(\alpha^\prime) \geq w_i(\alpha)$ (where the handicap is unchanged). One inequality is strict if and only if the other is too.  By Proposition \ref{proposition:kh}, the difference $\abs{w_i(\alpha) - w_i(\alpha^\prime)}$ is bounded above by $h/(2\lambda)$ for all $1\leq i \leq N$, and by the definition of $t$, 
\begin{equation}\label{10eq:5}w_t(\alpha) -w_{t+1}(\alpha)> \frac h\lambda.\end{equation}

Let $\sigma^\prime\in S_{\abs{J}}$ be a permutation such that
\begin{equation}\label{10eq:6}w_{\sigma^\prime(1)}(\alpha^\prime) \geq \cdots \geq w_{{\sigma^\prime(\abs{J})}}(\alpha^\prime),\end{equation}
where $\sigma(i)=i$ for $N<i\leq \abs{J}$. 
\begin{claim}\label{claim:kt}If $1\leq i \leq t$, then  $1\leq \sigma^\prime(i) \leq t$, and if $t < i^\prime \leq N$, then $t < \sigma^\prime(i^\prime) \leq \abs{J}$.
\end{claim}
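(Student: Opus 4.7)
The plan is to exploit the gap hypothesis together with the uniform perturbation bound: Proposition \ref{proposition:kh} guarantees $|w_i(\alpha) - w_i(\alpha')| \leq h/(2\lambda)$, while by the definition of $t$ we have the strictly larger gap $w_t(\alpha) - w_{t+1}(\alpha) > h/\lambda$. Since twice the perturbation is still strictly smaller than the gap, no coordinate of $w(\alpha')$ can cross the midpoint of the interval $[w_{t+1}(\alpha), w_t(\alpha)]$, and hence sorting cannot mix indices on opposite sides of the gap.

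Concretely, I would first note that for each $1\leq i \leq t$,
\[ w_i(\alpha') \geq w_i(\alpha) - \tfrac{h}{2\lambda} \geq w_t(\alpha) - \tfrac{h}{2\lambda}, \]
while for each $t < i' \leq N$,
\[ w_{i'}(\alpha') \leq w_{i'}(\alpha) + \tfrac{h}{2\lambda} \leq w_{t+1}(\alpha) + \tfrac{h}{2\lambda}. \]
The hypothesis $w_t(\alpha) - w_{t+1}(\alpha) > h/\lambda$ forces the first quantity to strictly exceed the second, so every coordinate of $w(\alpha')$ indexed by $\{1, \ldots, t\}$ is strictly larger than every coordinate indexed by $\{t+1, \ldots, N\}$.

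For the remaining indices $N < i \leq |J|$ we have $p_i \not\in \Supp S$, so by the choice $\alpha \in A$ (and the fact that the perturbation $v$ preserves this property) both $w_i(\alpha)$ and $w_i(\alpha')$ vanish; these indices are already fixed by $\sigma'$ by definition. Combining these observations, when we arrange the entries of $w(\alpha')$ in decreasing order, the top $t$ slots are filled precisely by the indices in $\{1, \ldots, t\}$, that is $\sigma'(\{1, \ldots, t\}) = \{1, \ldots, t\}$, and consequently $\sigma'(\{t+1, \ldots, N\}) \subseteq \{t+1, \ldots, |J|\}$, proving the claim. The only place any care is needed is verifying the strict inequality between the large and small sides, which is immediate from the factor-of-two margin built into the statement of Proposition \ref{proposition:kh}; I expect no other obstacles.
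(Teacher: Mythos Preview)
Your proposal is correct and follows essentially the same approach as the paper: both arguments use the continuity bound $|w_i(\alpha)-w_i(\alpha')|\le h/(2\lambda)$ from Proposition~\ref{proposition:kh} twice together with the gap $w_t(\alpha)-w_{t+1}(\alpha)>h/\lambda$ to show that every ``large'' coordinate of $w(\alpha')$ strictly exceeds every ``small'' one. The only cosmetic difference is that you bound the two sides separately and then compare, whereas the paper telescopes $w_i(\alpha')-w_{i'}(\alpha')$ directly; the content is identical.
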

\begin{proof}
Suppose that $p_i, p_{i^\prime} \in \Supp S$ are such that $i\le t < i^\prime$. By Proposition \ref{proposition:kh},
\begin{eqnarray*}w_i(\alpha^\prime) - w_{i^\prime}(\alpha^\prime) &=& \(w_i(\alpha^\prime) - w_i(\alpha)\)  + \(w_i(\alpha)- w_{i^\prime}(\alpha)\)+ \(w_{i^\prime}(\alpha)  - w_{i^\prime}(\alpha^\prime)\)\\
&\geq& -\frac{h}{2\lambda} + \(w_i(\alpha)- w_{i^\prime}(\alpha)\)- \frac{h}{2\lambda} .
\end{eqnarray*}
Using inequalities \eqref{10eq:5} and \eqref{10eq:6}, this is at least
\[ \(w_{t}(\alpha)- w_{t+1}(\alpha)\)-\frac{h}{\lambda} > -\frac h \lambda + \frac h \lambda =0.\]
Hence, if $1\leq i \leq t$, then $w_{i}(\alpha^\prime) \geq w_{i^\prime}(\alpha^\prime)$ for all $i^\prime >t$. Therefore $1\leq \sigma^\prime(i) \leq t$. Similarly, if $t < i \leq N$, then $w_{i^\prime}(\alpha^\prime) \geq w_{i}(\alpha^\prime)$ for all $1\leq i^\prime \leq t$. Hence, $t < \sigma^\prime(i) \leq N$, as desired.
\end{proof}
\begin{claim}\label{claim:kw(A)}With $\alpha^\prime$ as above, $w(\alpha^\prime)\in w(A)$. Hence $w(\alpha^\prime)$ was among those considered when the minimiser $w(\alpha)$ was chosen.
\end{claim}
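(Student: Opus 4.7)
The plan is to prove the stronger statement that $\alpha^\prime \in A$ itself; since $w(A)$ is by definition the image of $A$ under the map $w$, the claim $w(\alpha^\prime)\in w(A)$ follows immediately. Recall that $\alpha$ was chosen as a lexicographic minimiser over $w(A)$, so in particular $\alpha\in A$, meaning $\tilde S_{k_j}(p, \pi, \alpha, \lambda)=0$ for every $p\in J\setminus \Supp S$ and every $k_j$-plane $\pi$ through $p$. The task therefore reduces to propagating this vanishing from $\alpha$ to the perturbed handicap $\alpha^\prime = \alpha - v$.

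The natural tool is $k$-Plane Monotonicity (Lemma \ref{lemma:kMonotonicity}) applied with $\alpha^{(1)}=\alpha^\prime$ and $\alpha^{(2)}=\alpha$ at each $p=p_i\notin \Supp S$. The key observation is simply to read off the entries of $v$: since $p_i\notin \Supp S$, the definition of $v$ forces $v_{p_i}=1$, while for any $p^\prime = p_j\in J$ we have $v_{p_j}\in\{0,1\}$ according to whether $p_j\in\Supp S$ with $j>t$ or otherwise. Case-checking the two possibilities gives
\[\alpha^\prime_{p_i} - \alpha^\prime_{p_j} = \alpha_{p_i} - \alpha_{p_j} - (1 - v_{p_j}) \leq \alpha_{p_i}-\alpha_{p_j}\]
for all $p_j\in J$, which is precisely the hypothesis of Monotonicity at the base point $p_i$.

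Monotonicity then yields $\tilde S_{k_j}(p_i, \pi, \alpha^\prime, \lambda) \leq \tilde S_{k_j}(p_i, \pi, \alpha, \lambda) = 0$ for every $p_i\notin \Supp S$ and every $k_j$-plane $\pi\ni p_i$. Since $\tilde S_{k_j}$ is a non-negative integer (a cardinality), equality must hold. This places $\alpha^\prime\in A$, closing the argument.

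There is no substantive obstacle here: the content of the claim is a one-line consequence of Monotonicity, and the only thing to verify is the case-by-case inequality on the coordinates of $v$, which is a direct computation from the definition. The claim is really a sanity check ensuring that the perturbation procedure stays inside the admissible set over which the lexicographic minimum was taken, setting up the subsequent (and genuinely substantive) Claim that $w(\alpha^\prime)=w(\alpha)$.
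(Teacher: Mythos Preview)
Your proof is correct and in fact cleaner than the paper's own argument. The paper proceeds differently: it first asserts that, without loss of generality, the minimiser $\alpha$ can be chosen so that $\alpha_q < \alpha_p - \lambda$ for every $q\notin\Supp S$ and every $p\in\Supp S$ lying on a common plane through $q$; it then checks that this inequality survives the perturbation $\alpha\mapsto\alpha - v$ and invokes Uniform Boundedness (Lemma~\ref{lemma:kUniformBoundedness}) to conclude $\tilde S_{k_j}(q,\pi,\alpha',\lambda)=0$. That WLOG reduction is plausible but not argued in the text, and requires one to observe that pushing $\alpha_q$ further down for $q\notin\Supp S$ does not disturb the values of $\tilde S$ on $\Supp S$.

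Your route via Monotonicity (Lemma~\ref{lemma:kMonotonicity}) sidesteps this entirely: the coordinate check $v_{p_i}=1$ and $v_{p_j}\le 1$ immediately gives the hypothesis of Monotonicity at each $p_i\notin\Supp S$, and the conclusion $\tilde S_{k_j}(p_i,\pi,\alpha',\lambda)\le \tilde S_{k_j}(p_i,\pi,\alpha,\lambda)=0$ follows with no auxiliary normalisation of $\alpha$. This is both shorter and more self-contained.
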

\begin{proof}
Since $w(\alpha) \in w(A)$,  we have that $\tilde S_{k_j}(p,\cdot,\alpha,\lambda)=0$ for all $1\leq j \leq d$ and $p\not\in \Supp S$. We may assume that $\alpha$ is such that for all $q\not\in\Supp S$, and all $\pi \in \Pi_1,\cups \Pi_d$ so that $\pi\ni q$, if $p\in \pi\cap \Supp S$, then
\[\alpha_q < \alpha_p - \lambda.\]
We will show that $\tilde S_{k_j}(q,\cdot,\alpha,\lambda)=0$ for all $1\leq j \leq d$ and $q\not\in \Supp S$. Let $q\not\in \Supp S$ and $1\leq j \leq d$. If $\pi \in \Pi_1\cups\Pi_d$  is such that $\pi\ni q$, and $p\in \pi\cap \Supp S$, then
\[\alpha_q^\prime = \alpha_q-1 < (\alpha_p-\lambda) -1 = (\alpha_p -1)- \lambda \leq \alpha_p^\prime  -\lambda.\]
We may assume that every plane in $\Pi_1\cups \Pi_d$ intersects $\Supp S$, so such $p$ exists. Hence, by uniform boundedness (Lemma \ref{lemma:kUniformBoundedness}), $\tilde S_{k_j}(q,\pi,\alpha^\prime,\lambda)=0$. Hence $w(\alpha^\prime)\in w(A)$ and so $(w_p(\alpha^\prime))_p$ was among those tuples considered when $(w_p(\alpha))_p$ was chosen.
\end{proof}

\begin{claim}\label{claim:k3}
If $w(\alpha^\prime)\neq w(\alpha)$ then $w(\alpha^\prime)< w(\alpha)$. Hence $w(\alpha) = w(\alpha^\prime)$.
\end{claim}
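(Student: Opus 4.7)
The plan is to compare the sorted-decreasing tuples $(w_{\sigma^\prime(i)}(\alpha^\prime))_i$ and $(w_i(\alpha))_i$ position by position, with the goal of showing that if $w(\alpha^\prime) \neq w(\alpha)$ then the former is lex-strictly smaller. Once this is proved, the lex-minimality of $w(\alpha)$ within $w(A)$ (which $w(\alpha^\prime)$ belongs to by Claim~\ref{claim:kw(A)}) gives an immediate contradiction, forcing $w(\alpha^\prime) = w(\alpha)$.

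The comparison relies on three ingredients. First, Claim~\ref{claim:kt} tells us that $\sigma^\prime$ restricts to a bijection on $\{1,\ldots,t\}$, so the first $t$ entries of sorted $w(\alpha^\prime)$ are precisely the decreasing rearrangement of $(w_1(\alpha^\prime),\ldots,w_t(\alpha^\prime))$. Second, the $k$-Plane Monotonicity lemma yields $w_j(\alpha^\prime) \leq w_j(\alpha)$ for each $j \leq t$, since the perturbation $\alpha^\prime = \alpha - v$ only decreases the relative handicap of first-set points against second-set points. Third, the decreasing rearrangement is itself monotone: if $b_j \leq a_j$ pointwise then $b^*_i \leq a^*_i$ for each $i$ (if $b^*_i > a^*_i$, then $i$ of the $b$'s would exceed $a^*_i$, so $i$ of the $a$'s would too, contradicting the definition of $a^*_i$). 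Taking $a_j = w_j(\alpha)$ and $b_j = w_j(\alpha^\prime)$ gives the first $t$ sorted entries of $w(\alpha^\prime)$ componentwise $\leq$ those of $w(\alpha)$. If any of these inequalities is strict, then the earliest disagreement between the sorted tuples is a strict decrease within the first $t$ positions, delivering $w(\alpha^\prime) < w(\alpha)$ lex.

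The remaining case, and the main obstacle, is that the first $t$ sorted entries agree exactly. Combined with pointwise $\leq$ and equal sums this forces $w_j(\alpha^\prime) = w_j(\alpha)$ for each $j \leq t$ individually; to contradict $w(\alpha^\prime) \neq w(\alpha)$ one must then deduce $w_j(\alpha^\prime) = w_j(\alpha)$ for all $t < j \leq N$ as well. For this, assuming without loss of generality that $w_{p_j}(\alpha) > 0$ for each $j \leq t$, equality of the minimum product defining $w_{p_j}$ together with pointwise monotonicity of the factors forces $\tilde S(p_j,\pi^*_k,\alpha^\prime) = \tilde S(p_j,\pi^*_k,\alpha)$ for every plane $\pi^*_k$ appearing in a minimizing tuple for $p_j$ at $\alpha$. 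Feeding this into the conservation identity
\[\sum_{p \in \pi \cap \Supp S} \tilde S(p,\pi,\alpha,\lambda) = \binom{\lambda + k_j}{k_j},\]
combined with the signs $\tilde S(\cdot,\pi,\alpha^\prime) \leq \tilde S(\cdot,\pi,\alpha)$ on first-set points and $\geq$ on second-set points, and iterating along chains of adjacent multijoints in $\Supp S$ (here the connectedness hypothesis on $\Supp S$ is essential), should propagate $\tilde S$-equality to all of $\Supp S$, and therefore $w_j(\alpha^\prime) = w_j(\alpha)$ throughout, the desired contradiction.

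The delicate propagation from first-set to second-set points through the connectedness graph is where the argument is most subtle: $\tilde S$-equality at one point on a plane does not on its own force equality at another, so one must combine conservation with the fact that min-tuples for different first-set points can share planes, and then chain these transfers across the "contributes to" relation on $\Supp S$. This step, rather than the elementary sort-monotonicity, is the genuine content of the claim.
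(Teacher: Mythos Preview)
Your treatment of the first $t$ sorted entries via sort-monotonicity and Claim~\ref{claim:kt} is exactly the paper's argument. The divergence is in what you call the ``remaining case.'' The paper does not regard this as a separate obstacle at all: immediately before the three claims it records that, by monotonicity, $w_i(\alpha')\le w_i(\alpha)$ for $i\le t$ and $w_i(\alpha')\ge w_i(\alpha)$ for $i>t$, and that ``one inequality is strict if and only if the other is too.'' The proof of Claim~\ref{claim:k3} then starts from the hypothesis that \emph{some} $w_i(\alpha')\ne w_i(\alpha)$ and immediately concludes that there is an $i\le t$ with $w_i(\alpha')<w_i(\alpha)$, placing us squarely in your easy case. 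No connectedness, no propagation, no chains; the whole proof is four lines.

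Your proposed handling of the remaining case is also not correct as written. From $w_{p_j}(\alpha')=w_{p_j}(\alpha)>0$ you correctly extract $\tilde S(p_j,\pi_k^*,\alpha')=\tilde S(p_j,\pi_k^*,\alpha)$ for each plane $\pi_k^*$ in a \emph{minimizing} tuple at $p_j$ under $\alpha$. But conservation on such a $\pi_k^*$ does not let you propagate: several first-set and several second-set points may lie on $\pi_k^*$, and equality at one of them does not force equality at the rest. Conversely, the plane on which a second-set point $p_{i_0}$ witnesses a strict $\tilde S$-increase need not be a minimizing plane at \emph{any} first-set point it passes through, so there is nothing to chain back from. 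The connectedness of $\Supp S$ is the wrong tool here; in the paper it enters only \emph{after} Claim~\ref{claim:k3}, to derive the final contradiction once the perturbation is known to leave $(w_i)_i$ unchanged. So you have correctly located the one step that is not pure sort-monotonicity, but misdiagnosed both its difficulty and its cure.
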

\begin{proof}
Suppose that there is some $1\leq i \leq \abs{J}$ so that $w_i(\alpha^\prime) \neq w_i(\alpha)$.  Then there is an $i \leq t$ so that $w_i(\alpha^\prime) < w_i(\alpha)$ and $1\leq \sigma^\prime(i) \leq t$. That is, $w_i(\alpha^\prime)$ is strictly smaller than $w_i(\alpha)$  and $w_{\sigma^{\prime}(i)}(\alpha^\prime)$ is among the $t$ largest values of $(w_i(\alpha^\prime))_{1\leq i \leq \abs{J}}$ by Claim \ref{claim:kt}. Hence
\[(w_{\sigma^\prime(i)}(\alpha^\prime))_{1\leq i \leq t} <_\text{lex} (w_i(\alpha))_{1\leq i \leq t},\]
and therefore, $w(\alpha^\prime)$ is of strictly lower lexicographical order than $w(\alpha)$. Moreover, by Claim \ref{claim:kw(A)}, $w(\alpha^\prime)\in w(A)$. This {contradicts} that $w(\alpha)\in w(A)$ was chosen to be minimising. 
\end{proof}

Thus, $w_i(\alpha^\prime) = w_i(\alpha - v)= w_i(\alpha)$ for all $1\leq i \leq \abs{J}$. We have not yet contradicted our assumption that \eqref{khyp2} is false, however we have deduced that the perturbation $\alpha \mapsto (\alpha - v)$ must leave $(w_i)_i$ unchanged. To conclude, we observe that we may return to \eqref{10eq:9}, use $\alpha^\prime= \alpha-v$, and repeat the application of Claim \ref{claim:kt}, Claim \ref{claim:kw(A)} and Claim \ref{claim:k3}. We may repeat this process $c$ times to deduce that 
\[w_i(\alpha )= w_i(\alpha-v) = w_i(\alpha - 2v) = \cdots = w_i(\alpha-cv)\] for all $1\leq i \leq \abs{J}$. By connectedness of $\Supp S$, we can find a plane $\pi\in \Pi_j$ for some $1\leq j \leq d$ contributing to distinct multijoints $p_i,p_j\in \Supp S$ for some $i$ and $j$ satisfying $i\leq t < j\leq N$. Taking $c$ sufficiently large (depending on $\lambda$) forces $w_i(\alpha^\prime) = 0$ by uniform boundedness (Lemma \ref{lemma:kUniformBoundedness}) and hence $w_i(\alpha)=0$. That is, one of the large entries of the tuple $(w_i(\alpha))_i$ is zero. Since $(w_i)_i$ is decreasing in $i$ and each $w_i$ is non-negative, $w_{i^\prime}(\alpha)=0$ for all $i\leq i^\prime\leq \abs{J}$.  This {contradicts} our assumption that $w_t(\alpha) - w_{t+1}(\alpha) > h/\lambda$, and hence \eqref{khyp2} holds for all $1\leq i < N$.
\end{proof}

\subsection{Good Vanishing Conditions Yield A Factorisation}\label{section:6.5}

We now deduce Theorem \ref{theorem:2b} in the special case where the sets $\Pi_1, \ldots, \Pi_d$ are finite. 

Let $S:J\rightarrow \R_{\geq0}$ be non-negative with $\norm{S}_{d} = 1$ and let $\lambda$ be sufficiently large. By Lemma \ref{lemma:kHandicap}, we may choose a handicap $\alpha=\alpha(\lambda)$, so that the numbers $\tilde S_{k_j}(p,\pi_j,\alpha,\lambda)$, as constructed in Section \ref{section:6.2}, are such that there is an interval containing
 \[w_p(\alpha) = \min_{\delta(p,(\pi_j)_j)=1} \frac{1}{S(p)^d}\(\prod_{j=1}^d \frac{\tilde S(p,\pi_j,\alpha, \lambda)}{{\lambda+k_j\choose k_j}}\)\]
 for all $p\in \Supp S $ with length at most $h^\prime/\lambda$. Moreover, $\tilde S(p,\cdot, \alpha,\lambda)=0$ for all $p\not\in \Supp S$. That is, we can find $w\in \R$ so that
\[w-\e \leq w_p(\alpha) \leq w \]
for all ${p\in \Supp S}$ and so that $\tilde S(p,\cdot,\alpha,\lambda) =0$ if $S(p)=0$, where $\e = h/\lambda$, and $h$ is the constant given in  Lemma \ref{lemma:kHandicap}. For each $p\in J$, let $(\pi_j(p))_j$ be a choice of $k_j$-planes which minimises $\prod_{j=1}^d \tilde S(p,\pi_j(p),\alpha,\lambda)$ over all tuples $(\pi_j)_j$ so that $\delta(p,(\pi_j)_j)=1$. Corollary \ref{corollary:4} applies, so,
\begin{equation}\label{eq:35}
 w=\sum_{p\in J}S(p)^{d}w \geq \sum_{p\in J}\prod_{j=1}^d \frac{\tilde S(p, \pi_j(p),\alpha, \lambda)}{{\lambda+k_j\choose k_j}} \geq \frac{1}{\prod_{j=1}^d{\lambda+k_j\choose k_j}}{\lambda  + n \choose n} \gtrsim_{n} 1,\end{equation}
where we have used that $w_p(\alpha) = 0$ if $p\not\in \Supp S$, and  $k_1+\ldots + k_d = n$. Hence, $w\gtrsim_{n}1$.\vspace{0.3em}

If $\lambda$ is large enough, then the length of the interval $\e =h^\prime/\lambda < w$ so that the left endpoint $w-\e$ will be strictly positive. Hence, each $B(p,\pi,\alpha, \lambda)\neq \emptyset$ for all $p\in J$ and every $\pi$ that contributes to $p$. Define 
\[s_{k_j, \lambda}(p, \pi_j) := \frac{\tilde S(p,\pi_j,\alpha, \lambda)}{{\lambda+k_j\choose k_j}}.\]
For each $p\in J$, if $\pi_j\in \Pi_j$ is such that $p\not\in \pi_j$, then set $s_{k_j, \e}(p,\pi_j)=0$. Thus, 
\[1\lesssim_{n} w \leq \frac h \lambda + \frac{1}{S(p)^d} \prod_{j=1}^d s_{k_j, \lambda}(p,\pi_j(p)) .\]
for any $p\in \Supp S$. Since $\norm{S}_d =1$, $S(p) \leq 1$ for all $p\in J$. Hence
\[S^{d}(p) \lesssim_{n} \frac h \lambda  + \prod_{j=1}^d s_{k_j, \lambda}(p,\pi_j(p)).\]
Since the tuple of planes $(\pi_j(p))_j$ minimises the right-hand side over all tuples such that $\delta(p,(\pi_j)_j)=1$, 
\begin{equation}\label{eq:36}
\delta(p, (\pi_j)_j)S^{d}(p) \lesssim_{n}\frac h \lambda +  \prod_{j=1}^d s_{k_j,\lambda}(p,\pi_j)\end{equation}
for all tuples of planes $(\pi_j)_j$. By construction, for all $1\leq j \leq d$, $\pi_j\in \Pi_j$
\begin{equation}\label{eq:37}\sum_{p\in \pi_j\cap J} s_{k_j, \lambda}(p, \pi_j) = \frac{1}{{\lambda+k_j\choose k_j}}\sum_{p\in \pi_j\cap J}\tilde S_{k_j}(p,\pi_j(p),\alpha, \lambda) =1.\end{equation}
Both inequality \eqref{eq:36} and equation \eqref{eq:37} are uniform in $\e$ and each function 
\[\frac{1}{S(p)}s_{k_j, \lambda}(p,\pi_j) = \frac{1}{S(p)}\frac{\tilde S_{k_j}(p,\pi_j,\alpha, \lambda)}{{\lambda+k_j\choose k_j}}\]
can be realised as an $\R$-valued vector in $[0,1]^{\abs{J}\times\abs{\Pi_j}}$. Hence, passing to a subsequence if necessary, we may let $s_{k_j} = \lim_{\lambda \rightarrow \infty} s_{k_j, \lambda}$ for each $1\leq j \leq d$. Letting $\lambda\rightarrow \infty$ in  \eqref{eq:36} and \eqref{eq:37} concludes our proof of Theorem \ref{theorem:2b} under the assumption that the sets $\Pi_j$ are finite. \qed

\subsection{Multijoint Factorisation to Discrete Factorisation}
With Theorem \ref{theorem:2b} proved for finite sets $\Pi_j$, we deduce Theorem \ref{corollary:01}.  If the field $\F$ is finite, then we can apply Theorem \ref{theorem:2b} with each $\pi_j = \Gr(k_j, \F^n)$. Hence, we may assume that the field is infinite. 

Let $S : \F^n \rightarrow \R_{\geq0}$ be finitely supported. Consider 
\[\calO:=\left\{\(\pi_j \cap \Supp S\)_j : \delta(p,\pi_1, \ldots, \pi_d)=1, \, p\in\F^n,\, \pi_j \subset \F^n \right\},\]
where in the definition of $\calO$, we consider any tuple of $k_j$-planes (of which there may be infinitely many) and any $p\in \Supp S$. However, since $\Supp S$ is finite, so too is $\calO$. For every $(E_j)_j\in \calO$, choose a tuple of $k_j$-planes so that $\delta(p,\pi_1, \ldots, \pi_d)=1$ and $\pi_j\cap \Supp S = E_{j}$ for every $1\leq j \leq d$. We hence define the finite sets $\Pi_j$ to consist of all such planes $\pi_j$. In particular, since the field is infinite, for each $p\in \Supp S$, the tuple $(\{p\})_j$ belongs to $\calO$ and hence $\Supp S$ is a subset of the multijoints formed by the finite sets $\Pi_1, \ldots, \Pi_d$. Hence, there exist factorising functions $\tilde s_{k_j}$ that satisfy the displays described in Theorem \ref{theorem:2b}.

Recall from Remark \ref{remark:03} that if two $k_j$-planes $\pi$ and $ \pi^\prime$ are distinct and satisfy $\pi\cap \Supp S = \pi^\prime \cap \Supp S$, then $B(\cdot, \pi,\alpha, \lambda) = B(\cdot, \pi^\prime, \alpha,\lambda)$. Hence, the functions $\tilde s_{k_j}$ satisfy $\tilde s_{k_j}(\cdot, e(\pi)) = \tilde s_{k_j}(\cdot, e(\pi^\prime))$. Now, consider the pairs $(p,V_j)\in \Supp S\times\Gr(k_j, \F^n)$. By construction, there exists $\pi_j \in \Pi_j$ so that $\pi_j \cap \Supp S = (p+V_j)\cap \Supp S$ and it is well-defined to set $s_{k_j}(p, V_j) := \tilde s_{k_j}(p, e(\pi_j))$. We additionally set $s_{k_j}(p,\cdot)=0$ for any $p\not\in\Supp S$, whereby each $s_{k_j}$ is finitely supported and defined on $\F^n \times \Gr(k_j, \F^n)$. Each $s_{k_j}$ automatically satisfies
\[\sum_{p \in \pi_j} s_{k_j}(p, e(\pi_j)) =\sum_{p \in \pi_j\cap J} s_{k_j}(p, e(\pi_j)) = 1\]
for any $k_j$-plane $\pi_j\subset \F^n$, establishing the second display of Theorem \ref{corollary:01}. Turning to the first display, let $p\in \Supp S$ and let $V_j \in \Gr(k_j, \F^n)$ be such that $V_1\wedges V_d = 1$. For each $j$ there exists $\pi_j \in \Pi_j$ so that $(p+V_j)\cap \Supp S = \pi_j\cap \Supp S$ and $\delta(p,\pi_1, \ldots, \pi_d) = 1$ by construction. Hence
\begin{eqnarray*}
(V_1\wedges V_d)S(p)^d&=&\delta(p, \pi_1, \ldots, \pi_d)S(p)^d \\
 &\lesssim_{n}& \prod_{j=1}^d \tilde s_{k_j}(p, e(\pi_j))\hspace{1em} =\hspace{1em} \prod_{j=1}^d s_{k_j}(p,V_j).\end{eqnarray*}
 This concludes the proof of Theorem \ref{corollary:01}.
 
An alternative presentation of this argument in the case where each $k_j=1$ can also be found in \cite[Section 4]{CarTan}.
\subsection{Multijoints of Varieties}
Given $k_j$-dimensional varieties $\gamma_j$, respectively, we extend the definition of $\delta$ so that for any $p\in \F^n$ which is a regular point of each $\gamma_j$,
\[\delta(p,\gamma_1, \ldots, \gamma_d) := \(\prod_{j=1}^d \chi_{\gamma_j}(p)\)e(T_p\gamma_1)\wedges e(T_p(\gamma_d),\]
where $T_p\gamma_j$ denotes the tangent plane to $\gamma_j$ at $p$.
Although we will not include a proof, a suitable modification to the argument, above, establishes Theorem \ref{theorem:6}:
\begin{theorem}[Multijoint Factorisation for Varieties]\label{theorem:6} Let $\Gamma_1, \ldots, \Gamma_d$ be sets of $k_1$-$, \ldots, k_d$-dimensional varieties in $\F^n$, respectively, where $k_1 + \ldots + k_d=n$, and let $J = \{p : \exists (\gamma_j)_j \text{ so that } \delta(p,\gamma_1, \ldots, \gamma_d)=1\}$. 

For all finitely supported $S:J\rightarrow \R_{\geq0}$ with $\norm{S}_{d}=1$  there exist  functions $s_{k}: J\times \(\cup_{j : k_j = k}\Gamma_{j}\)\rightarrow \R_{\geq0}$  for each $k\in \{k_1, \ldots, k_d\}$, so that
\[\delta(p,\gamma_1, \ldots, \gamma_d)S(p)^d \lesssim_{n} \prod_{j=1}^ds_{k_j}(p,\gamma_j),\]
 for all $p\in J$ and $(\gamma_1, \ldots, \gamma_d)\in \Gamma_1\timess \Gamma_d$, and so that
\[\sum_{p\in \gamma_j\cap J}s_{k_j}(p, \gamma_j) = \deg \gamma_j\]
for all $\gamma_j\in \Gamma_j$ and all $1\leq j \leq d$. 
\end{theorem}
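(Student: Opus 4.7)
The plan is to mirror the development in Sections \ref{section:a2}--\ref{section:BG}, with $k_j$-planes replaced throughout by $k_j$-dimensional varieties and with the tangent plane $e(T_p\gamma_j)$ playing the role formerly played by $e(\pi_j)$. At each regular point $p$ of a $k$-dimensional variety $\gamma$, choose local coordinates adapted to $T_p\gamma$ and define $\B_r(p,\gamma,\lambda)$ to be the space of linear functionals on the restriction space $\F_\lambda[x_1,\ldots,x_n]/(I_\gamma\cap \F_\lambda[x_1,\ldots,x_n])$ of the form $f\mapsto Df(p)$, where $D$ is an order-$r$ Hasse differential operator on $\gamma$ at $p$ (measured intrinsically using the $k$ local parameters). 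The sets $B_r(p,\gamma,\alpha,\lambda)$ are then built iteratively along the priority order $\prec_\alpha$ exactly as in Section \ref{section:6.2}, and one verifies that translation invariance, uniform boundedness, monotonicity, and continuity of $\tilde S_k(p,\gamma,\alpha,\lambda) = \abs{B(p,\gamma,\alpha,\lambda)}$ carry over \emph{mutatis mutandis} from Section \ref{section:handicaps}, since these properties are consequences of the construction alone and are insensitive to whether the ambient object is a plane or a variety.

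The crucial numerical input replacing equation \eqref{eq:UpperBound} is Hilbert polynomial asymptotics. For a $k$-dimensional variety $\gamma \subset \F^n$ of degree $\deg\gamma$, the dimension of the space of restrictions of degree-$\leq\lambda$ polynomials to $\gamma$ satisfies
\[\sum_{p\in \gamma\cap J}\tilde S_{k}(p,\gamma,\alpha,\lambda) = \deg\gamma\cdot\binom{\lambda+k}{k}+O_\gamma(\lambda^{k-1}).\]
This is precisely where the factor $\deg\gamma_j$ will enter the final normalisation: defining $s_{k_j,\lambda}(p,\gamma_j) := \tilde S_{k_j}(p,\gamma_j,\alpha,\lambda)/\binom{\lambda+k_j}{k_j}$, summation along $\gamma_j$ now yields $\deg\gamma_j+O(1/\lambda)$ in place of $1$. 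With the right-hand side of \eqref{eq:UpperBound} thus adjusted, the perturbation argument proving Lemma \ref{lemma:kHandicap} goes through unchanged (the extra $\deg\gamma_j$ factors are absorbed into $h^\prime$ and into the dimensional constants), and Corollary \ref{corollary:4} continues to give $w\gtrsim_n 1$ after one factors out $\prod_j\deg\gamma_j$, leading via the argument of Section \ref{section:6.5} to factorising functions in the limit $\lambda\to\infty$ with the stated sum-norm $\deg\gamma_j$ along each variety.

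The main obstacle, as in the planar case, is establishing the analogue of the Vanishing Lemma (Lemma \ref{lemma:13}). The Tidor--Yu--Zhao proof exploits the fact that polynomials restricted to a $k$-plane are freely $k$-variate polynomials of the same degree, so that the priority order $\prec$ ``peels off'' directional derivatives transversal to the plane one at a time; for a variety $\gamma$ one instead works within the graded ring $\F[x_1,\ldots,x_n]/I_\gamma$, whose Hilbert function governs the correct dimension count. The natural strategy is: assuming a non-zero $f\in\F_\lambda[x_1,\ldots,x_n]$ with $D_1\cdots D_d f(p)=0$ for every admissible choice at every multijoint $p$, one argues at the $\prec$-minimal $p$ that the transversality encoded by $e(T_p\gamma_1)\wedges e(T_p\gamma_d)=1$ promotes the selected operators $D_j$ on each $\gamma_j$ to a full-rank system of derivations in the $n$ variables near $p$, producing a non-trivial new vanishing condition on $f$; iterating $\prec$-downward exhausts a basis dual to $\F_\lambda[x_1,\ldots,x_n]$ by the Hilbert-function count, and forces $f=0$. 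Care is needed to ensure that multijoints lie in the regular locus of every contributing variety (which is automatic from the definition of $\delta$) and that the differential operators chosen intrinsically on each $\gamma_j$ genuinely combine transversally at the multijoint; this transversality, guaranteed by the non-vanishing of the discrete wedge product of the tangent planes, is the key geometric ingredient replacing the rigidity of planes in the original lemma.
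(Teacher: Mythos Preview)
Your proposal is correct and follows essentially the same route as the paper. The paper's own treatment is even briefer: it simply observes that the inputs to Lemma \ref{lemma:kHandicap}---the analogue of \eqref{eq:020}, the Vanishing Lemma/Corollary \ref{corollary:4}, and the uniform boundedness, monotonicity and continuity Lemmas---are all already established for varieties in \cite{TYZ20}, so the perturbation argument of Section \ref{section:BG} goes through with the adjusted numerology; in particular, what you flag as ``the main obstacle'' (the Vanishing Lemma for varieties) is proved there and need not be re-derived.
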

Indeed, the inputs to our new Lemma \ref{lemma:kHandicap}, above, were numbers $\tilde S_{k_j}$ which satisfy: equation \eqref{eq:020}; Corollary \ref{corollary:4}; Lemma \ref{lemma:kUniformBoundedness}; Lemma \ref{lemma:kMonotonicity} and Lemma \ref{lemma:kContinuity}. To deduce Theorem \ref{theorem:6}, our proof of Lemma \ref{lemma:kHandicap} remains valid, with appropriately generalised numerology of the aforementioned inputs, and these are all given in \cite{TYZ20}.

\addcontentsline{toc}{chapter}{References}
\renewcommand{\bibname}{References}
\bibliography{ThesisBib}{}
\bibliographystyle{alpha}
\end{document}